\documentclass[12 pt,a4paper]{amsart} 
\usepackage{amsfonts,amssymb,amscd,amsmath,enumerate,verbatim,calc} 
\usepackage{float}
\usepackage{amsthm}
\newtheorem{theorem}{Theorem}[section]
\newtheorem{definition}{Definition}[section]

\newtheorem{lemma}[theorem]{Lemma}
\newtheorem{corollary}[theorem]{Corollary}
\newtheorem{proposition}[theorem]{Proposition}
\newtheorem{remark}[theorem]{Remark}
\usepackage{mathtools}

\newtheorem{example}[theorem]{Example}
\numberwithin{equation}{section}

\usepackage{amsfonts}
\newcommand{\field}[1]{\mathbb{#1}}          

\newcommand{\N}{\field{N}}
                   \newcommand{\Z}{\field{Z}}
\newcommand{\C}{\field{C}}

\newcommand{\im}{\rm Im}

\renewcommand{\ker}{\textnormal{ker}}
\renewcommand{\im}{\textnormal{Im}}

\usepackage{tikz-cd}

\textwidth = 15.5 cm 
\textheight = 26 cm 
\topmargin = -1cm 
\oddsidemargin = 1 cm 
\evensidemargin = 1 cm 
\pagestyle{plain} 

\begin{document}
	
	\title{Nilpotency and Capability in multiplicative Lie algebras}
	
	\author{Amit Kumar$^{1}$, Mani Shankar Pandey$^{2}$ AND Sumit Kumar Upadhyay$^{3}$\vspace{.4cm}\\
		{$^{1, 3}$Department of Applied Sciences,\\ Indian Institute of Information Technology Allahabad\\Prayagraj, U. P., India} \vspace{.3cm}\\ $^{2}$ Department of Sciences, \\Indian Institute of Information Technology \\Design and Manufacturing, Kurnool, Andhra Pradesh}
	
	\thanks{$^1$amitiiit007@gmail.com, $^2$manishankarpandey4@gmail.com,  $^3$upadhyaysumit365@gmail.com}
	
\keywords{Multiplicative Lie algebra, Capability, Cover, Nilpotency, Schur multiplier}

\begin{abstract}
	This paper aims to introduce  the concept of nilpotency and capability in multiplicative Lie algebras. Also, we see the existence of covers of a multiplicative Lie algebra and thoroughly examine their relationships with capable and  perfect multiplicative Lie algebras. 
	\end{abstract}

\maketitle

\section{Introduction} 
A multiplicative Lie algebra is a new algebraic structure introduced by Ellis \cite{GJ} with the primary objective of characterizing universal commutator identities. Building upon the concept of multiplicative Lie algebra, Point and Wantiez \cite{FP} further developed the notions of solvability and nilpotency. Exploring the proximity of multiplicative Lie algebras to improper multiplicative Lie algebras, Pandey, Lal, and Upadhyay \cite{MRS} introduced the concepts of Lie commutator and multiplicative Lie center, leading to a new understanding of solvable and nilpotent multiplicative Lie algebras. In this article, we present a novel idea of nilpotency in multiplicative Lie algebras which implies the existing concepts of nilpotency in \cite{FP} and \cite{MRS}. The theory of multiplicative Lie algebras can be further developed in the direction of existing theory for groups as well as Lie algebra with the aid of this new notion of nilpotency.



The concept of capability for a group, first introduced by R. Baer \cite{Baer}, involves the systematic investigation of conditions under which a group $G$ can serve as the group of inner automorphisms of another group $E$ $(G \cong E/Z(E))$. A group that satisfies this criterion is commonly referred to as a capable group. It is evident that this theory can be extended to other algebraic structures, such as Lie algebras or multiplicative Lie algebras, where an analogous notion of capability can be developed. Notably, significant progress has already been made in the study of capable Lie algebras \cite{PB93, SAM}.  In this article, we concentrate on the study of capable multiplicative Lie algebras, which we refer to as Lie capable. In this direction, we define a smallest ideal $\mathcal{Z}^*(G)$ of a multiplicative Lie algebra $G$ such that the quotient structure $\frac{G}{\mathcal{Z}^*(G)}$ is  Lie capable . Through these investigations, we contribute to the understanding of capability within multiplicative Lie algebras and shed light on the significance of the ideal $\mathcal{Z}^*(G)$ in characterizing the capability of such algebras. Our investigation delves into the relationship between the Schur multiplier and the capability of a multiplicative Lie algebra. In addition, we explore the concept of cover of a multiplicative Lie algebra, as defined in \cite{MS}. Notably, we establish the existence of a cover for every multiplicative Lie algebra and provide an example illustrating that the cover of a multiplicative Lie algebra is not necessarily unique.  We also discussed relations between free presentations and covers of a  multiplicative Lie algebra. Lastly, we explore the relationship between covers and the Schur multipliers of finite perfect multiplicative Lie algebras. 
\section{Preliminaries} 
In this section, we provide a review of key definitions and results pertaining to multiplicative Lie algebras, which will be  used in this paper.
\begin{definition}\cite{GJ}
	A multiplicative Lie algebra is a triple $ (G,\cdot,\star), $ where $G$ is a set, $ \cdot $ and $ \star $ are two binary operations on $G$ such that $ (G,\cdot) $ is a group (need not be abelian) and for all  $x, y, z  \in  G$, the following identities hold: 
	\begin{enumerate}
		\item $ x\star x=1 $  
		\item $ x\star(y \cdot z)=(x\star y)\cdot{^y(x\star z)} $ 
		\item $ (x \cdot y)\star z= {^x(y\star z)} \cdot (x\star z) $ 
		\item $ ((x\star y)\star {^yz})((y\star z)\star{^zx})((z\star x)\star{^xy})=1 $ 
		\item $ ^z(x\star y)=(^zx\star {^zy})$ 
	\end{enumerate}
	where $^xy$ denotes $xyx^{-1}.$ We say that $ \star $ is a multiplicative Lie algebra structure on the group $G$.
\end{definition}
\begin{definition}
Let $(G,\cdot,\star)$ be a multiplicative Lie algebra. Then
\begin{enumerate}
\item A subgroup $H$ of $G$ is said to be subalgebra of G if $x\star y \in H$ for all $x,y \in H$.

\item A subalgebra $H$ of $G$ is said to be an ideal of $G$ if it is a normal subgroup of $G$ and $x\star y \in H$ for all $x \in G$ and $y \in H$. The ideal generated by $\langle a\star b ~\mid ~ a, b \in G\rangle $ is denoted by $G\star G$.

\item Let $(G',\circ , \star')$ be another multiplicative Lie algebra. A group homomorphism $\psi: G \longrightarrow G'$ is called a multiplicative Lie algebra homomorphism if $\psi(x\star y) =\psi(x) \star ' \psi(y)$ for all $x, y \in G$.

\item The ideal	$ LZ(G) = \{x \in G | x \star  y = 1 $ for all $y\in G\}$  is called the Lie center of $G.$ 
\item The ideal	$ MZ(G) = \{x \in G | x \star  y = [x,y] $ for all $y\in G\}$  is termed as Multiplicative Lie center of $G.$
\end{enumerate}
\end{definition}
Ellis \cite{GJ} shows that the multiplicative Lie product ``$\star$" satisfies some identities. In 2019, 
Lal and Upadhyay \cite{RLS}, rephrase the same identities. 
\begin{proposition}\cite{GJ,RLS}
	Let $(G,\cdot,\star)$ be a  multiplicative Lie algebra. Then the following  identities hold:
	\begin{enumerate}
		\item $ (1\star x) = 1 = (x\star 1) $ for all $x\in G.$
		
		\item $ (x\star y)(y\star x) = 1 $  for all $x,y \in G.$
		
		\item $ ^{(x\star y)}(u\star v) = {^{[x,y]}}(u\star v) $ for all $x,y,u,v \in G.$
		
		\item $ [(x\star y),z] = ([x,y]\star z) $ for all  $x,y,z \in G.$
		
		\item $ x^{-1}\star y = {^{x^{-1}}(x\star y)^{-1}}$ and $ x\star y^{-1} = {^{y^{-1}}(x\star y)^{-1}} $ for all $x,y \in G.$
		\end{enumerate}
\end{proposition}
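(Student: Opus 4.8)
First I would establish the three ``bilinear'' identities (1), (2), (5) by direct substitution into the defining axioms, and then bootstrap the two commutator identities (3) and (4) from them together with the Jacobi identity (axiom (4)).

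For (1): putting $y=z=1$ in axiom (2) and using ${}^{1}g=g$ gives $x\star 1=(x\star 1)\cdot{}^{1}(x\star 1)=(x\star 1)^{2}$, so $x\star 1=1$; dually, putting $x=y=1$ in axiom (3) gives $1\star z=(1\star z)^{2}$, so $1\star z=1$. For (2): by axiom (1), $1=(xy)\star(xy)$; expanding the outer product by axiom (3) and then $y\star(xy)$ and $x\star(xy)$ by axiom (2) while using $x\star x=y\star y=1$, one obtains $1={}^{x}\bigl((y\star x)(x\star y)\bigr)$, and since conjugation is injective this gives $(y\star x)(x\star y)=1$, i.e.\ $(x\star y)(y\star x)=1$. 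For (5): using (1), the expansion $1=1\star y=(x\cdot x^{-1})\star y$ by axiom (3) gives $1={}^{x}(x^{-1}\star y)\cdot(x\star y)$, hence $x^{-1}\star y={}^{x^{-1}}(x\star y)^{-1}$; symmetrically $1=x\star 1=x\star(y\cdot y^{-1})$ expanded by axiom (2) gives $x\star y^{-1}={}^{y^{-1}}(x\star y)^{-1}$.

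The substantive content is identity (3), from which (4) follows quickly. For (3) I would first invoke axiom (5): since ${}^{g}(u\star v)={}^{g}u\star{}^{g}v$ for every $g$, writing $c=(x\star y)\,[x,y]^{-1}$ one gets ${}^{(x\star y)}(u\star v)={}^{c}\bigl({}^{[x,y]}(u\star v)\bigr)$, so (3) is equivalent to the statement that $c$ centralises every Lie product $u\star v$, i.e.\ centralises the ideal $G\star G$. Proving this last assertion is where axiom (4) enters: my plan is to specialise $x,y,z$ suitably in the Jacobi identity $((x\star y)\star{}^{y}z)\,((y\star z)\star{}^{z}x)\,((z\star x)\star{}^{x}y)=1$ and then simplify the three factors using axioms (2),(3),(5) and the identities (1),(2),(5) already proved, so that the relation collapses to ${}^{(x\star y)}(u\star v)={}^{[x,y]}(u\star v)$. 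Granting (3), identity (4) is a short computation: expanding $[x,y]\star z$ through the defining axioms (replacing $x^{-1}\star z$ and $y^{-1}\star z$ via (5)) and, on the other side, rewriting $[(x\star y),z]=(x\star y)\cdot{}^{z}(y\star x)$ by means of (2) and axiom (5), the desired equality reduces after cancellation precisely to ${}^{[x,y]}(y\star z)={}^{(x\star y)}(y\star z)$, which is the special case $u=y$, $v=z$ of (3).

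The routine expansions needed for (1), (2), (5) and for the reduction of (4) to (3) are mechanical. The genuine obstacle is the single step that extracts the conjugation-agreement identity (3) from the Jacobi identity: one must find the right specialisation in axiom (4) and carefully untangle the nested conjugations, and it is there — not in the bookkeeping — that the real content of the proposition lies.
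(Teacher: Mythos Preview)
The paper does not supply its own proof of this proposition: it is quoted from \cite{GJ,RLS}, so there is no in-paper argument to compare your sketch against.

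Your derivations of (1), (2) and (5) are correct and are exactly the standard ones. The proposal is incomplete, however, precisely where you yourself say the content lies. For (3) you give no specialisation of the Jacobi axiom and no computation --- only the assertion that ``one must find the right specialisation'' --- so the key identity is simply not established. Your reduction of (4) to (3) is also more optimistic than the algebra supports: expanding $[x,y]\star z=(xyx^{-1}y^{-1})\star z$ via axiom~(3) and identity~(5) yields \emph{four} conjugated $\star$-factors,
\[
{}^{[x,y]}(y\star z)^{-1}\cdot{}^{xyx^{-1}}(x\star z)^{-1}\cdot{}^{x}(y\star z)\cdot(x\star z),
\]
and matching this against $(x\star y)\cdot{}^{z}(y\star x)$ does not collapse to a single instance of (3); the conjugating elements $[x,y]$, $xyx^{-1}$, $x$, $1$ on one side and $1$, $z$ on the other do not line up without further identities of the same strength as what you are trying to prove. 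In the original sources the Jacobi identity is used (after a substitution of the type $z\mapsto{}^{y^{-1}}z$) to obtain (3) and (4) essentially in tandem, and both require carrying out the full nested-conjugation computation you flag as the ``genuine obstacle''. So the outline has the right ingredients, but the heart of the proof --- the actual Jacobi manipulation --- is still owed, and the claimed clean dependence $(3)\Rightarrow(4)$ does not hold as stated.
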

The following definition of a perfect multiplicative Lie algebra has been taken from \cite{RLS}.
\begin{definition}\cite{RLS}
	A multiplicative Lie algebra $G$ is said to be perfect  if
	$${(G\star G)[G,G]}=G.$$
\end{definition}
\begin{definition}\cite{RLS}
A short exact sequence of multiplicative Lie algebras
 $$1\longrightarrow H \longrightarrow G \longrightarrow K\longrightarrow 1$$
is said to be a central extension if $H\subseteq Z(G)\cap LZ(G).$
\end{definition}
\begin{proposition}\cite{RLS}\label{P1} 
	Let  $G$ be a perfect multiplicative Lie algebra, and let $$1\longrightarrow H \longrightarrow K \longrightarrow G\longrightarrow 1$$ be a central extension by $G$. Then the subalgebra ${(K\star K)[K,K]}$ is perfect, and $$1\longrightarrow H\cap ({(K\star K)[K,K]}) \longrightarrow {(K\star K)[K,K]} \longrightarrow G\longrightarrow 1$$ is also a central extension by $G$.
\end{proposition}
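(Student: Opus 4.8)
The plan is to abbreviate $K^{*} := (K\star K)[K,K]$, let $\pi\colon K\to G$ be the projection (so $\ker\pi = H\subseteq Z(K)\cap LZ(K)$), and first establish the factorization $K = K^{*}H$; after that, the two-fold centrality of $H$ will let us absorb the $H$-component out of every $\star$-product and every group commutator. For the factorization: since $\pi$ is a surjective multiplicative Lie algebra homomorphism, $\pi(K\star K)=G\star G$ and $\pi([K,K])=[G,G]$ (image of the ideal generated by $\star$-products is the ideal generated by their images, and similarly for commutators), hence $\pi(K^{*})=(G\star G)[G,G]=G$ because $G$ is perfect; therefore $K^{*}H=\pi^{-1}(G)=K$.

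The heart of the argument is to prove $K\star K = K^{*}\star K^{*}$ and $[K,K]=[K^{*},K^{*}]$ as subsets of $K$. Writing an arbitrary element of $K$ as $a_{1}h$ with $a_{1}\in K^{*}$, $h\in H$, identities (2) and (3) of the definition together with $H\subseteq LZ(K)$ give $(a_{1}h_{1})\star(b_{1}h_{2}) = a_{1}\star b_{1}$, so the generating set $\{a\star b\mid a,b\in K\}$ of the ideal $K\star K$ actually coincides with $\{a\star b\mid a,b\in K^{*}\}$. Then I would verify that the ideal $K^{*}\star K^{*}$ of $K^{*}$ is already an ideal of the bigger algebra $K$: normality under conjugation by $K$ reduces to normality under conjugation by $K^{*}$ (the $H$-part acts trivially since $H\subseteq Z(K)$), and closure under $\star$ with $K$ reduces to closure under $\star$ with $K^{*}$ (the $H$-part contributes $1$ since $H\subseteq LZ(K)$, via identity (3)). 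Being an ideal of $K$ that contains the generators of $K\star K$, it contains $K\star K$; the reverse inclusion is immediate, so $K\star K = K^{*}\star K^{*}$. For commutators, the routine computation $[a_{1}h_{1},b_{1}h_{2}]=[a_{1},b_{1}]$ yields $[K,K]=[K^{*},K^{*}]$. Combining, $K^{*}=(K\star K)[K,K]=(K^{*}\star K^{*})[K^{*},K^{*}]$, i.e.\ $K^{*}$ is perfect. For the second assertion, $\pi|_{K^{*}}$ is onto $G$ with kernel $H\cap K^{*}$, so $1\to H\cap K^{*}\to K^{*}\to G\to 1$ is exact, and from $H\subseteq Z(K)\cap LZ(K)$ one gets $H\cap K^{*}\subseteq Z(K^{*})\cap LZ(K^{*})$ (an element central in $K$ is a fortiori central in the subgroup $K^{*}$, and an element annihilating all of $K$ under $\star$ annihilates all of $K^{*}$), so the extension is central.

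I expect the main obstacle to be the bookkeeping in the middle step: one must carefully distinguish a $\star$-product of two elements from the ideal $G\star G$ generated by all such products, and an ideal of $K$ from an ideal of $K^{*}$. The genuinely nontrivial point is that $K^{*}\star K^{*}$, although defined only inside $K^{*}$, is stable under the full $K$-action — and this is precisely where the Lie-centrality hypothesis $H\subseteq LZ(K)$, as opposed to mere group-centrality $H\subseteq Z(K)$, enters in an essential way.
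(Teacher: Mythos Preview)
Your argument is correct. Note, however, that the paper does not supply its own proof of this proposition: it is stated with a citation to \cite{RLS} and used as a black box. So there is no in-paper proof to compare against.

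That said, your approach is the natural one and matches what one expects from the cited source: factor $K=K^{*}H$ using perfectness of $G$, then exploit $H\subseteq Z(K)\cap LZ(K)$ to strip the $H$-component out of every $\star$-product and group commutator, yielding $K\star K=K^{*}\star K^{*}$ and $[K,K]=[K^{*},K^{*}]$. Your care in distinguishing ``ideal of $K^{*}$'' from ``ideal of $K$'' and in checking that $K^{*}\star K^{*}$ is stable under the full $K$-action (which is exactly where Lie-centrality, not just group-centrality, of $H$ is needed) is well placed; this is the only point where a careless write-up could go wrong, and you handle it correctly.
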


\begin{proposition}\cite{RLS}\label{P2} 
	Every perfect multiplicative Lie algebra $G$ admits (of course, a unique) universal central extension.
\end{proposition}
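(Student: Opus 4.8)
The plan is to mimic the classical construction of the universal central extension of a perfect group (or Lie algebra), carrying both operations along. First I would fix a free presentation $1 \to R \to F \xrightarrow{\rho} G \to 1$ of $G$, where $F$ is a free multiplicative Lie algebra; I take for granted the existence of such free objects and the attendant lifting property for homomorphisms out of $F$ (these are exactly the free presentations alluded to in the introduction). Next, set $F^{*} := (F\star F)[F,F]$, the ``perfect core'' of $F$. This is an ideal of $F$, and since $G$ is perfect we have $\rho(F^{*}) = (G\star G)[G,G] = G$, so that $\rho$ restricts to a surjection $F^{*} \to G$ with kernel $F^{*}\cap R$, whence $F = F^{*}R$ and $F^{*}/(F^{*}\cap R) \cong G$. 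Now let $S$ be the ideal of $F^{*}$ generated by all elements $[f,r]$, $f\star r$, and $r\star f$ with $f \in F$ and $r \in R$. Because $R$ is an ideal of $F$, every such generator lies in $F^{*}\cap R$, so $S \subseteq F^{*}\cap R$, and I would take as the candidate the extension $1 \to H \to K \to G \to 1$ with $K := F^{*}/S$ and $H := (F^{*}\cap R)/S$.

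Then I would check the three properties making this a universal central extension. For centrality, $[F^{*},F^{*}\cap R] \subseteq [F,R] \subseteq S$ gives $H \subseteq Z(K)$, while $(F^{*}\cap R)\star F^{*}$ and $F^{*}\star(F^{*}\cap R)$ both lie in $(F\star R)\cup(R\star F) \subseteq S$ (using the identity $(x\star y)(y\star x)=1$ to relate the two orders), giving $H \subseteq LZ(K)$. For perfectness of $K$: using $F = F^{*}R$ together with the bilinearity identities $(xy)\star z = {}^{x}(y\star z)\,(x\star z)$ and $x\star(yz) = (x\star y)\,{}^{y}(x\star z)$, the conjugation identity ${}^{z}(x\star y) = ({}^{z}x\star{}^{z}y)$, and the analogous commutator calculus, one shows $F\star F \equiv F^{*}\star F^{*} \pmod{S}$ and $[F,F] \equiv [F^{*},F^{*}] \pmod{S}$; combining these, $F^{*} = (F\star F)[F,F] \equiv (F^{*}\star F^{*})[F^{*},F^{*}] \pmod{S}$, that is, $(K\star K)[K,K] = K$.

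For the universal property, let $1 \to A \to E \xrightarrow{\pi} G \to 1$ be an arbitrary central extension. By freeness, $\rho$ lifts to a multiplicative Lie algebra homomorphism $\phi\colon F \to E$ with $\pi\phi = \rho$; then $\phi(R) \subseteq A \subseteq Z(E)\cap LZ(E)$, so $\phi$ annihilates each generator of $S$ (indeed $\phi([f,r]) = [\phi(f),\phi(r)] = 1$ because $\phi(r)$ is central, and $\phi(f\star r) = \phi(f)\star\phi(r) = 1$ because $\phi(r)\in LZ(E)$, and similarly for $r\star f$). Hence $\phi(S) = 1$, and $\phi|_{F^{*}}$ descends to a homomorphism $\bar\phi\colon K \to E$ lying over $G$. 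Finally, I would dispatch uniqueness of such a lift by the standard argument: two lifts $\psi_{1},\psi_{2}\colon K \to E$ over $G$ differ by a map $k \mapsto \psi_{1}(k)\psi_{2}(k)^{-1}$ into $A$, which is a homomorphism since $A$ is central; but $K$ is perfect and $A$ is an abelian group with $A\star A = 1$, so any homomorphism $K \to A$ kills $K\star K$ and $[K,K]$, hence all of $K$, forcing $\psi_{1} = \psi_{2}$. The same reasoning, applied to two universal central extensions of $G$, gives uniqueness of the universal central extension itself.

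The main obstacle I anticipate is the perfectness of $K$: this requires a careful expansion of $F\star F$ and $[F,F]$ via $F = F^{*}R$ using the Leibniz-type and conjugation identities, tracking that all mixed contributions ($F^{*}\star R$, $R\star F^{*}$, $R\star R$, $[F^{*},R]$, $[R,R]$) fall inside $S$, and also checking that $S$ is genuinely an ideal of $F^{*}$ so that $F^{*}/S$ is a multiplicative Lie algebra; the remaining steps are routine transcriptions of the group-theoretic argument. One could instead bypass the direct perfectness check by replacing $K$ with $(K\star K)[K,K]$ and invoking Proposition \ref{P1}, but proving $K$ perfect outright keeps the construction cleaner.
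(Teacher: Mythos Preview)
This proposition is not proved in the paper; it is quoted from \cite{RLS} as a preliminary result (note the citation attached to the statement), so there is no in-paper proof to compare against. Your construction is nonetheless exactly the one the paper relies on: in its notation your $F^{*}$ is ${}^{M}[F,F]$, your $S$ is ${}^{M}[R,F]$ (the subgroup generated by the elements $[f,r]$ and $f\star r$ is already normal and $\star$-closed in $F$ by the calculation in Proposition~3.1, so the ideal of $F^{*}$ you generate coincides with ${}^{M}[R,F]$), and the quotient ${}^{M}[F,F]/{}^{M}[R,F]$ reappears in Corollary~\ref{P3} as the cover of a perfect $G$ and in the proof of Theorem~\ref{P6}(i) as the universal central extension. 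Your outline of centrality, the lifting argument for the universal property, and the uniqueness argument are the standard ones and are correct; the perfectness step via $F = F^{*}R$ and the bilinearity identities also goes through as you describe, and your alternative of passing to ${}^{M}[K,K]$ via Proposition~\ref{P1} is equally valid.
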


\section{Nilpotent multiplicative Lie algebras }

In this section, we introduce a new idea of nilpotency in multiplicative Lie algebras. Also, we relate it to the Schur multiplier of a multiplicative Lie algebra.  For that, we need the following notions.

	\begin{proposition}
		Let $(G,\cdot,\star)$ be a multiplicative Lie algebra and $H$ be an ideal of $G$. Then $(G\star H)[G,H]$ is an ideal of $G.$ 
	\end{proposition}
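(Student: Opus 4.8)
The plan is to show that the set $N := (G\star H)[G,H]$ is simultaneously a normal subgroup of $(G,\cdot)$ closed under $\star$-multiplication by arbitrary elements of $G$. First I would observe that $N$ is generated as a subgroup by the elements $g\star h$ and $[g,h]$ with $g\in G$, $h\in H$. Since $H$ is an ideal, each generator $g\star h$ already lies in $H$ and each commutator $[g,h]$ lies in $H$ (as $H$ is normal), so $N\subseteq H$. The normality of $N$ in $G$ is handled by conjugating generators: using identity (5) of the definition, ${}^z(g\star h)=({}^zg\star{}^zh)$ with ${}^zh\in H$ since $H\trianglelefteq G$, so conjugates of $\star$-generators are again $\star$-generators of the same form; and ${}^z[g,h]=[{}^zg,{}^zh]$ with ${}^zh\in H$, so conjugates of commutator generators are again commutator generators. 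Hence ${}^zN=N$ for all $z\in G$.

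The substantive part is verifying that $x\star n\in N$ for all $x\in G$ and $n\in N$. Here I would argue by a two-step reduction. First, using the bimultiplicativity-type identities (2) and (3) of the definition — namely $x\star(uv)=(x\star u)\,{}^u(x\star v)$ — together with the fact, just established, that $N$ is a normal subgroup closed under conjugation by $G$, it suffices to check that $x\star n\in N$ when $n$ ranges over a generating set of $N$, i.e. when $n=g\star h$ or $n=[g,h]$. Indeed, if $n=n_1n_2$ with each $x\star n_i\in N$, then $x\star n=(x\star n_1)\,{}^{n_1}(x\star n_2)$, and ${}^{n_1}(x\star n_2)\in N$ because $N$ is normal in $G$ and $n_1\in G$; inverses are handled by identity (5) of the Proposition in the preliminaries, $x\star n^{-1}={}^{n^{-1}}(x\star n)^{-1}$, which again lands in $N$ by normality. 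So the whole problem reduces to the two generator cases.

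For $n=g\star h$: by identity (4) of the earlier Proposition, $x\star(g\star h)=x\star([g,h]\star\text{(nothing)})$ — more precisely I would use $[(g\star h),x]=([g,h]\star x)$ and the antisymmetry $(u\star v)(v\star u)=1$ to rewrite $x\star(g\star h)=(x\star[g,h])^{-1}$ up to the appropriate conjugation, reducing this case to the commutator case. For $n=[g,h]$: I would expand $x\star[g,h]=x\star(g\,hg^{-1}h^{-1})$ using identities (2), (3), (5) of the definition repeatedly; every term produced is of the form ${}^w(x\star g^{\pm1})$ combined with ${}^w(x\star h^{\pm1})$-type factors, and since $h\in H$ every factor $x\star h^{\pm1}$ lies in $G\star H\subseteq N$, while the factors $x\star g^{\pm1}$ need not lie in $N$ on their own but appear only inside expressions that, after collecting, reduce to commutators $[\,\cdot\,,\,\cdot\,]$ with one entry in $H$ or to elements of $G\star H$. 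The cleanest route is probably to use the Hall–Witt-type identity (4) of the definition directly on the triple $(x,g,h)$ and exploit that two of the three cyclic terms already lie in $N$.

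The main obstacle I anticipate is precisely this last generator computation: controlling $x\star[g,h]$ and $x\star(g\star h)$ requires a careful bookkeeping of the conjugation factors $^{y}(\cdot)$ produced by the non-additivity of $\star$, and showing that after all cancellations the result is a product of elements each visibly in $(G\star H)$ or in $[G,H]$. Once the generator cases are settled, the extension to all of $N$ by the multiplicativity identities and normality is routine, and combining this with the normality and containment-in-$H$ already shown gives that $N$ is an ideal of $G$.
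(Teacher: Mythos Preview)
Your skeleton (normality via conjugating generators, then reducing $x\star n\in N$ to the generator cases by the multiplicativity identity) is exactly what the paper does. The gap is in your two generator cases, where you miss the one-line arguments and instead propose computations that you yourself flag as obstacles.

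For $n=g\star h$: there is nothing to do. Since $H$ is an ideal, $g\star h\in H$, hence $x\star(g\star h)\in G\star H\subseteq N$ directly. Your attempt to pass through $[(g\star h),x]=([g,h]\star x)$ confuses the group commutator $[(g\star h),x]$ with the Lie product $(g\star h)\star x$; these are different objects, and the rewriting ``$x\star(g\star h)=(x\star[g,h])^{-1}$'' does not follow.

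For $n=[g,h]$: this is where identity~(4) of the preliminaries Proposition is the key, and the paper uses it. From $[(c\star d),z]=([c,d]\star z)$ one gets, by inverting both sides, $z\star[c,d]=[z,\,c\star d]$. With $c=g$, $d=h$ this gives $x\star[g,h]=[x,\,g\star h]$, and since $g\star h\in H$ (again because $H$ is an ideal) this lies in $[G,H]\subseteq N$. Your proposed direct expansion of $x\star(ghg^{-1}h^{-1})$ does produce factors of the form ${}^{w}(x\star g^{\pm1})$ with $g\notin H$, and there is no mechanism by which ``collecting'' turns these into commutators with an $H$-entry; that route does not close without invoking the very identity above. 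The Hall--Witt identity (axiom~(4) of the definition) is likewise unnecessary here.

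So the approach is the paper's approach, but the substantive step---identity~(4) of the preliminaries applied to $x\star[g,h]$---is precisely what you are missing.
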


\begin{proof}
	
	Let $ g \in G $ and $ (a\star b)[c,d] \in(G\star H)[G,H]. $ Then 
\begin{align*}
g((a\star b)[c,d])g^{-1} &= (g(a\star b)g^{-1})(g[c,d]g^{-1}) \\ &= (^ga\star ^gb)[^gc,^gd] \in(G\star H)[G,H].
\end{align*}
This implies $(G\star H)[G,H]$ is a normal subgroup of $G.$ Further
\begin{align*}
\begin{split}
g\star((a\star b)[c,d]) &= (g\star(a\star b))^{(a\star b)}(g\star [c,d]) \\ &= (g\star(a\star b))^{(a\star b)}[g,(c\star d)] \\ &= (g\star(a\star b))[^{(a\star b)}g,^{(a\star b)}(c\star d)] \in (G\star H)[G,H].
\end{split}
\end{align*}
This implies $(G\star H)[G,H]$ is an ideal of $G.$

\end{proof}

\begin{remark}
For the sake of brevity and clarity, we adopt the notation $^M[G,H]$ to represent the ideal $(G\star H)[G,H]$, going forward.
\end{remark}

Define $M_n(G)$ inductively as follows: $M_0(G) = G.$ Assuming that $M_n(G)$ has already been defined, define $M_{n+1}(G) = ^M[G,M_n(G)].$ This gives us a descending chain:
\begin{align*}
	G = M_0(G)\unrhd M_1(G)\unrhd M_2(G)\unrhd \cdots \unrhd M_n(G) \unrhd \cdots
\end{align*}
of ideals of $G$ which is termed as lower central series of $G.$

\begin{remark}
	We denote $ LZ(G)\cap Z(G) $  by $ \mathcal{Z}(G). $ 
\end{remark}

Next define ideals $\mathcal{Z}_n(G)$ of $G$ inductively as $\mathcal{Z}_0(G) = \{1\}$, $\mathcal{Z}_1(G) = \mathcal{Z}(G).$ 
Let $\phi: G \rightarrow \frac{G}{\mathcal{Z}(G)}$ be a natural surjective multiplicative Lie algebra homomorphism.
Define $\mathcal{Z}_2(G) = \phi^{-1}(\mathcal{Z}(\frac{G}{\mathcal{Z}(G)})).$ This implies $\frac{\mathcal{Z}_2(G)}{\mathcal{Z}_1(G)} = \mathcal{Z}(\frac{G}{\mathcal{Z}(G)}).$ Similarly,
define  $\mathcal{Z}_{n+1}(G) = \phi^{-1}(\mathcal{Z}(\frac{G}{\mathcal{Z}_n(G)})).$ This implies $\frac{\mathcal{Z}_{n+1}(G)}{\mathcal{Z}_n(G)} = \mathcal{Z}(\frac{G}{\mathcal{Z}_n(G)}).$  This gives us an ascending chain:
\begin{align*}
\{1\} =\mathcal{Z}_0(G)\unlhd \mathcal{Z}_1(G)\unlhd \mathcal{Z}_2(G)\unlhd \cdots \unlhd \mathcal{Z}_n(G) \unlhd \cdots
\end{align*}
of ideals of $G$ which is termed as the upper central series of $G.$

\begin{theorem}
	The lower central series of $G$ terminates to $\{1\}$ at the $nth$ step if and only if the upper central series of $G$ terminates to $G$ at the $nth$ step.
\end{theorem}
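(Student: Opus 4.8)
The plan is to mirror the classical group-theoretic proof: establish two families of inclusions between the terms of the two series by induction, and then read off the equivalence. Write $\overline{G}_j := G/\mathcal{Z}_j(G)$, and recall the two facts we may use without comment: ${}^M[G,H] = (G\star H)[G,H]$ is an ideal whenever $H$ is an ideal, and $\mathcal{Z}_{j+1}(G)/\mathcal{Z}_j(G) = \mathcal{Z}(\overline{G}_j) = LZ(\overline{G}_j)\cap Z(\overline{G}_j)$.

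First I would prove: \emph{if $\mathcal{Z}_n(G) = G$, then $M_i(G)\subseteq\mathcal{Z}_{n-i}(G)$ for every $0\le i\le n$.} This goes by induction on $i$. The case $i=0$ is just $M_0(G) = G = \mathcal{Z}_n(G)$. For the step, assume $M_i(G)\subseteq\mathcal{Z}_{n-i}(G)$, let $x\in G$ and $y\in M_i(G)$, and pass to $\overline{G}_{n-i-1}$. There the image $\overline{y}$ lies in $\mathcal{Z}_{n-i}(G)/\mathcal{Z}_{n-i-1}(G) = \mathcal{Z}(\overline{G}_{n-i-1})$, so $\overline{x}\star\overline{y} = 1$ and $[\overline{x},\overline{y}] = 1$; hence both $x\star y$ and $[x,y]$ lie in the ideal $\mathcal{Z}_{n-i-1}(G)$. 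As this holds for all such $x,y$, the ideal $\mathcal{Z}_{n-i-1}(G)$ contains $(G\star M_i(G))[G,M_i(G)] = M_{i+1}(G)$, which closes the induction. Taking $i=n$ yields $M_n(G)\subseteq\mathcal{Z}_0(G) = \{1\}$.

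Symmetrically I would prove: \emph{if $M_n(G) = \{1\}$, then $M_{n-i}(G)\subseteq\mathcal{Z}_i(G)$ for every $0\le i\le n$.} Again induct on $i$; the case $i=0$ is $M_n(G) = \{1\} = \mathcal{Z}_0(G)$. For the step, assume $M_{n-i}(G)\subseteq\mathcal{Z}_i(G)$ and take $y\in M_{n-i-1}(G)$. For every $x\in G$ we have $x\star y\in {}^M[G,M_{n-i-1}(G)] = M_{n-i}(G)\subseteq\mathcal{Z}_i(G)$ and likewise $[x,y]\in M_{n-i}(G)\subseteq\mathcal{Z}_i(G)$; thus in $\overline{G}_i$ the image $\overline{y}$ satisfies $\overline{x}\star\overline{y} = 1$ and $[\overline{x},\overline{y}] = 1$ for all $\overline{x}$, i.e. $\overline{y}\in LZ(\overline{G}_i)\cap Z(\overline{G}_i) = \mathcal{Z}(\overline{G}_i) = \mathcal{Z}_{i+1}(G)/\mathcal{Z}_i(G)$, so $y\in\mathcal{Z}_{i+1}(G)$. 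This gives $M_{n-i-1}(G)\subseteq\mathcal{Z}_{i+1}(G)$, and at $i=n$ we get $G = M_0(G)\subseteq\mathcal{Z}_n(G)$, i.e. $\mathcal{Z}_n(G) = G$.

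These two implications say precisely that $M_n(G) = \{1\}$ if and only if $\mathcal{Z}_n(G) = G$; applying this at the indices $n$ and $n-1$ (and using monotonicity of both chains) shows that the lower central series first reaches $\{1\}$ at step $n$ exactly when the upper central series first reaches $G$ at step $n$, which is the assertion. The one point that needs care — and the reason the argument works at all — is that $\mathcal{Z}(G) = LZ(G)\cap Z(G)$ is the intersection of the Lie center with the group center, so membership in $\mathcal{Z}(\overline{G}_j)$ must be verified against both the $\star$-identity and the commutator identity; this is exactly the information packaged into ${}^M[G,H] = (G\star H)[G,H]$, which is why that particular ideal (rather than $G\star H$ alone) is the right object to iterate. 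Everything else is routine bookkeeping, relying on the fact that each $M_j(G)$ and each $\mathcal{Z}_j(G)$ is an ideal so that the quotients $\overline{G}_j$ and the images used above are well defined.
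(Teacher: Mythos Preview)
Your proof is correct and follows essentially the same approach as the paper's: both directions are handled by the identical inductive inclusions $M_{n-i}(G)\subseteq\mathcal{Z}_i(G)$ and $M_i(G)\subseteq\mathcal{Z}_{n-i}(G)$, verified by passing to the quotient $G/\mathcal{Z}_j(G)$ and checking membership in $LZ\cap Z$ there. The only cosmetic difference is that you treat the two implications in the opposite order and add the remark about reading off the ``first reaches'' version at indices $n$ and $n-1$.
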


\begin{proof}
	Suppose $M_n(G) = \{1\}.$ To prove $ \mathcal{Z}_n(G) = G $, we show inductively that $M_{n-i}(G) \subseteq \mathcal{Z}_i(G)$ for all $1\leq i \leq n.$ For $i=0$ the statement holds. Suppose it is true for $i$, that is $M_{n-i}(G) \subseteq \mathcal{Z}_i(G).$ Let $a\in G$, $b\in M_{n-i-1}(G). $ Then $(a\star b) \in M_{n-i}(G)$, $[a,b] \in M_{n-i}(G).$ Thus we have $a\mathcal{Z}_i(G)\star b\mathcal{Z}_i(G) = (a\star b)\mathcal{Z}_i(G) = \mathcal{Z}_i(G). $ This implies $b\mathcal{Z}_i(G) \in LZ(\frac{G}{\mathcal{Z}_i(G)}).$ Also $[a\mathcal{Z}_i(G),b\mathcal{Z}_i(G)] = [a,b]\mathcal{Z}_i(G) = \mathcal{Z}_i(G).$ This implies  $b\mathcal{Z}_i(G) \in Z(\frac{G}{\mathcal{Z}_i(G)}).$ Hence $b\mathcal{Z}_i(G) \in LZ(\frac{G}{\mathcal{Z}_i(G)}) \cap Z(\frac{G}{\mathcal{Z}_i(G)}) = \mathcal{Z}(\frac{G}{\mathcal{Z}_i(G)}) = \frac{\mathcal{Z}_{i+1}(G)}{\mathcal{Z}_i(G)}. $ This implies $b \in \mathcal{Z}_{i+1}(G).$ We proved that $M_{n-i}(G) \subseteq \mathcal{Z}_i(G)$ for all $1\leq i \leq n.$ So $G = M_{n-n}(G)\subseteq \mathcal{Z}_n(G) \Rightarrow \mathcal{Z}_n(G) = G. $ 
	
	Conversely, suppose that $ \mathcal{Z}_n(G) = G. $  Again, we  show inductively that  $M_i(G) \subseteq \mathcal{Z}_{n-i}(G)$ for all $1\leq i \leq n.$ For $i=0$ the statement is true. Suppose it is true for $i$, that is $M_i(G) \subseteq \mathcal{Z}_{n-i}(G).$ Let $a,c \in G$ and $b,d \in M_i(G).$ Then $(a\star b)[c,d] \in M_{i+1}(G).$ Since  $b,d \in M_i(G)\subseteq \mathcal{Z}_{n-i}(G) $, we have $b\mathcal{Z}_{n-i-1}(G), d\mathcal{Z}_{n-i-1}(G) \in \frac{\mathcal{Z}_{n-i}(G)}{\mathcal{Z}_{n-i-1}(G)} = \mathcal{Z}(\frac{G}{\mathcal{Z}_{n-i-1}(G)}) =  LZ(\frac{G}{\mathcal{Z}_{n-i-1}(G)}) \cap Z(\frac{G}{\mathcal{Z}_{n-i-1}(G)}).$ This implies  $a\mathcal{Z}_{n-i-1}(G)\star  b\mathcal{Z}_{n-i-1}(G) = \mathcal{Z}_{n-i-1}(G)$ and so $(a\star b)\in \mathcal{Z}_{n-i-1}(G). $
	Also $[ c\mathcal{Z}_{n-i-1}(G), d\mathcal{Z}_{n-i-1}(G) ]= \mathcal{Z}_{n-i-1}(G)$ and so $[c,d] \in \mathcal{Z}_{n-i-1}(G). $ Hence $(a\star b)[c,d] \in\mathcal{Z}_{n-i-1}(G). $  We proved that $M_i(G) \subseteq \mathcal{Z}_{n-i}(G)$ for all $1\leq i \leq n.$ So $ M_n(G)\subseteq \mathcal{Z}_0(G) = \{1\} \Rightarrow  M_n(G) = \{1\}. $ 
	
\end{proof}

Thus, we have the following definition of nilpotency in multiplicative Lie algebras.

\begin{definition}
	A  multiplicative Lie algebra $(G,\cdot,\star)$ is said to be $M\mathcal{Z}$-nilpotent if $M_n(G) = \{1\} $ or equivalently $ \mathcal{Z}_n(G) = G $ for some $n \in \N.$ It is said to be $M\mathcal{Z}$-nilpotent of class $n$ if $M_n(G) = \{1\} $ but $M_{n-1}(G) \neq \{1\} $ or equivalently $ \mathcal{Z}_n(G) = G $ but $ \mathcal{Z}_{n-1}(G) \neq G. $ 
\end{definition}

\begin{remark}
	\begin{enumerate}
		
		\item  If we consider a Lie simple group $G$, it can be observed that the concept of an $M\mathcal{Z}$-nilpotent multiplicative Lie algebra is equivalent to that of a nilpotent group.
		
		\item Suppose we have a multiplicative Lie algebra $(G, \cdot, \star)$, where the underlying group structure $(G, \cdot)$ is abelian. In this case, it can be observed that the concept of an $M\mathcal{Z}$-nilpotent multiplicative Lie algebra aligns with the nilpotency criteria established by Point and Wantiez \cite{FP}.
		\item Consider an improper multiplicative Lie algebra $(G, \cdot, \star)$, where the underlying group structure $(G, \cdot)$ is non-abelian and non-nilpotent. In this case, it can be concluded that $G$ is Lie nilpotent, indicating the successive iterated commutators of elements in $G$ eventually become trivial. However, it is important to note that $G$ is neither $M\mathcal{Z}$-nilpotent nor nilpotent as a multiplicative Lie algebra.
	\end{enumerate}
\end{remark}
The proofs of the traditional results discussed in this section bear resemblance to the corresponding proofs for groups.
\begin{proposition}
	Let $(G,\cdot,\star)$ be $M\mathcal{Z}$-nilpotent multiplicative Lie algebra. Then
	\begin{enumerate}
	\item $(G,\cdot,\star)$  is  Lie nilpotent  multiplicative Lie algebra.
	\item $(G,\cdot,\star)$ is nilpotent  multiplicative Lie algebra.
	\item $(G,\cdot)$ is nilpotent group.
	\end{enumerate}
  But the converse need not be true.
\end{proposition}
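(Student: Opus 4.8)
\emph{Strategy of proof.}
For the three positive implications I would argue from one common observation: for every ideal $H$ of $G$, both $G\star H$ and $[G,H]$ lie inside ${}^M[G,H]=(G\star H)[G,H]$, which is itself an ideal of $G$ by the Proposition just proved. Since each $M_n(G)$ is therefore an ideal, a single induction on $n$ shows simultaneously that the lower central series of the group $(G,\cdot)$, the Lie lower central series $G\supseteq G\star G\supseteq G\star(G\star G)\supseteq\cdots$, and the lower central series that define nilpotency of a multiplicative Lie algebra in \cite{FP} and \cite{MRS} are all squeezed, term by term, inside the chain $M_n(G)$. Concretely, writing $\gamma_1(G)=G$ and $\gamma_{n+1}(G)=[G,\gamma_n(G)]$ for the ordinary lower central series of $(G,\cdot)$: the base case is $\gamma_1(G)=G=M_0(G)$, and if $\gamma_{n+1}(G)\subseteq M_n(G)$ then $\gamma_{n+2}(G)=[G,\gamma_{n+1}(G)]\subseteq[G,M_n(G)]\subseteq{}^M[G,M_n(G)]=M_{n+1}(G)$; the ``$\star$''-version and the \cite{FP}/\cite{MRS}-version run identically, using $G\star M_n(G)\subseteq{}^M[G,M_n(G)]$ and the ideal property of $M_n(G)$. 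Hence if $G$ is $M\mathcal{Z}$-nilpotent, say $M_n(G)=\{1\}$, all of these series reach $\{1\}$ by step $n+1$, which is exactly (1), (2) and (3).

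For the converse I would exhibit explicit multiplicative Lie algebras meeting the weaker hypotheses but not $M\mathcal{Z}$-nilpotency. The improper structure $x\star y=1$ on a non-abelian, non-nilpotent group $G$ (say $G=S_3$) has $G\star G=\{1\}$, so $G$ is Lie nilpotent; but $M_n(G)=\gamma_{n+1}(G)$ for every $n$ (the ``$\star$''-factor disappears), and this never terminates, so $G$ is not $M\mathcal{Z}$-nilpotent — this is the situation already recorded in the remark above, and it shows that (1) is not reversible. For (3), take instead an abelian group carrying a non-nilpotent Lie-ring bracket, for instance $G=\Z_2\oplus\Z_2$ with $(a_1,a_2)\star(b_1,b_2)=(a_1 b_2+a_2 b_1,\,0)$, the additive group of the two-dimensional non-abelian Lie algebra over $\F_2$: here $(G,\cdot)$ is abelian, hence nilpotent as a group, but $M_n(G)$ stabilises at the non-trivial ideal $\Z_2\oplus\{0\}$, so $G$ is not $M\mathcal{Z}$-nilpotent. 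For (2), I would use the other improper structure $x\star y=[x,y]$ on a non-nilpotent group $G$: one checks it is nilpotent as a multiplicative Lie algebra in the sense of \cite{FP} and \cite{MRS} (here $MZ(G)=G$), while again $M_n(G)=\gamma_{n+1}(G)\neq\{1\}$.

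The positive direction is a routine induction; the real work is in the converse. I expect the main obstacle there to be twofold: verifying in each case that the proposed pair $(\cdot,\star)$ genuinely satisfies all five axioms of a multiplicative Lie algebra — only the Jacobi-type identity (4) is not immediate — and confirming that the chain $M_n(G)$ honestly fails to terminate; and, for part (2), making sure the chosen example is nilpotent with respect to the precise series used by \cite{FP} and \cite{MRS}, a definition I would want to quote verbatim before invoking it. A secondary point is that the comparison in the positive direction must be an honest term-by-term containment against those same precise series, not merely a heuristic one.
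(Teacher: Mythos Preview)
Your inductive argument for the positive direction is correct and is exactly what the paper intends: the paper states only that ``the proofs of the traditional results discussed in this section bear resemblance to the corresponding proofs for groups'' and gives no further details, so your term-by-term containment $\gamma_{n+1}(G)\subseteq M_n(G)$ (and its $\star$-analogue) is the natural fleshing-out.

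For the converse, however, you have the two notions of nilpotency interchanged, and this makes your examples for (1) and (2) fail as stated. In this paper ``Lie nilpotent'' refers to the \cite{MRS} notion based on the Lie commutator $(x\star y)[x,y]^{-1}$, while ``nilpotent multiplicative Lie algebra'' refers to the \cite{FP} notion based on the descending $\star$-series $G\supseteq G\star G\supseteq G\star(G\star G)\supseteq\cdots$. Moreover, ``improper'' in this paper means $x\star y=[x,y]$, not $x\star y=1$ (the latter is the \emph{trivial} structure). Consequently:
\begin{itemize}
\item Your example for (1), namely $S_3$ with $x\star y=1$, has $G\star G=\{1\}$ and is therefore nilpotent in the \cite{FP} sense; but its Lie commutator equals $[x,y]^{-1}$, so the \cite{MRS} series coincides with the group lower central series of $S_3$ and never terminates. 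Thus $S_3$ with trivial $\star$ is \emph{not} Lie nilpotent, and your example does not witness the failure of the converse of (1). It does, however, witness the failure of the converse of (2); this is precisely the paper's second example (a perfect group with trivial $\star$).
\item Your example for (2), a non-nilpotent group with $x\star y=[x,y]$, has $MZ(G)=G$ and trivial Lie commutator, hence \emph{is} Lie nilpotent in the \cite{MRS} sense; but $G\star G=[G,G]$, so the \cite{FP} $\star$-series is the ordinary lower central series and does not terminate. Hence this example is \emph{not} nilpotent in the \cite{FP} sense and does not refute the converse of (2). It does refute the converse of (1), and this is the paper's first example ($A_4$ with improper structure).
\end{itemize}
In short, your two counterexamples are correct but assigned to the wrong items; swapping them fixes the argument and brings it in line with the paper. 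Your example for (3) is fine and is essentially the paper's $V_4$ with $a\star b=b$, written additively.
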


\begin{example}
	\begin{enumerate}
		\item Consider the group $A_4 = <a,b:a^3 = b^2 = 1, aba = ba^2b>$ with improper multiplicative Lie algebra. Then it is Lie nilpotent  multiplicative Lie algebra. But $A_4$ is not a nilpotent group therefore it is not $M\mathcal{Z}$-nilpotent multiplicative Lie algebra.
		
		\item Consider a perfect group  $G$ $(i.e. [G,G] = G)$ with trivial  multiplicative Lie algebra structure. Since $G\star G = \{1\}. $ Therefore $G$ is nilpotent multiplicative Lie algebra but not $M\mathcal{Z}$-nilpotent multiplicative Lie algebra.

	    \item Let $V_4 = <a,b:a^2 = b^2 = 1, ab = ba>$ be the Klein four group with  multiplicative Lie algebra structure $a\star b = b.$ Then $V_4$ is nilpotent group but not  $M\mathcal{Z}$-nilpotent multiplicative Lie algebra.
    	
    	\item  Consider the dihedral group $D_4 = \langle a, b \mid a^2 = b^4 = 1, ab = b^{-1}a \rangle$ with the multiplicative Lie algebra structure defined by $a\star b = b$.
By calculating the ideals, we find that $\mathcal{Z}_0(G) = {1}$ and $\mathcal{Z}_1(G) = LZ(G)\cap Z(G) = {1}$. Consequently, $\mathcal{Z}_n(G) = {1}$ for all $n\in \mathbb{N}$.
Hence, it can be concluded that $(D_4,\cdot,\star)$ is not an $M\mathcal{Z}$-nilpotent multiplicative Lie algebra.

Similarly, consider the group $A_4 = \langle a, b \mid a^3 = b^2 = 1, aba = ba^2b \rangle$ with the multiplicative Lie algebra structure defined by $a\star b = b$. In this case as well, the ideals $\mathcal{Z}_n(G)$ are trivial, i.e., $\mathcal{Z}_n(G) = {1}$ for all $n\in \mathbb{N}$.
Thus, it can be concluded that $(A_4,\cdot,\star)$ is not an $M\mathcal{Z}$-nilpotent multiplicative Lie algebra.
    	   \end{enumerate}
        \end{example}

Now we will state following traditional results for $M\mathcal{Z}$-nilpotent multiplicative Lie algebras.
\begin{proposition}
	\begin{enumerate}
		\item Subalgebra of a  $M\mathcal{Z}$-nilpotent multiplicative Lie algebra is
	 $M\mathcal{Z}$-nilpotent.
	 
	 \item Homomorphic image of a $M\mathcal{Z}$-nilpotent multiplicative Lie algebra is $M\mathcal{Z}$-nilpotent. In particular, the quotient of a $M\mathcal{Z}$-nilpotent multiplicative Lie algebra is $M\mathcal{Z}$-nilpotent.
	 
	 \item Let $H$ be an ideal of $G$ contained in $\mathcal{Z}(G)$ such that $\frac{G}{H}$
	  is $M\mathcal{Z}$-nilpotent. Then $G$ is also $M\mathcal{Z}$-nilpotent.
	 
	 \item A multiplicative Lie algebra $G$ is $M\mathcal{Z}$-nilpotent if and only if $\frac{G}{\mathcal{Z}(G)}$ is $M\mathcal{Z}$-nilpotent.

	 \item Let $G$ be a $M\mathcal{Z}$-nilpotent multiplicative Lie algebra. Then $\mathcal{Z}(G) \neq \{1\}.$
	 
	 \item The direct product of a finite number of  $M\mathcal{Z}$-nilpotent multiplicative Lie algebras is $M\mathcal{Z}$-nilpotent.
	 
	 \item Let $H$ and $K$ be ideals of a multiplicative Lie algebra $G$ such that $\frac{G}{H}$
	  and $\frac{G}{K}$ are $M\mathcal{Z}$-nilpotent of classes $n$ and $m$, respectively. Then $\frac{G}{H\cap K}$ is also  $M\mathcal{Z}$-nilpotent of class at most $max(n, m).$
	\end{enumerate}
\end{proposition}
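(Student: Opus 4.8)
The plan is to reduce all seven items to two elementary inductive facts about the lower central series $M_\bullet$ and then run the familiar group-theoretic arguments. Throughout one uses that, by definition, $M_{n+1}(X)={}^M[X,M_n(X)]$ is the subgroup (in fact ideal) of $X$ generated by all elements $x\star y$ and $[x,y]$ with $x\in X$ and $y\in M_n(X)$; in particular $M_{n+1}(X)=\{1\}$ is equivalent to the vanishing of all those generators.

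The two facts are: (i) if $H$ is a subalgebra of $G$, then $M_n(H)\subseteq M_n(G)$ for every $n$; and (ii) if $f\colon G\to G'$ is a surjective multiplicative Lie algebra homomorphism, then $f(M_n(G))=M_n(G')$ for every $n$. Both are proved by induction on $n$. For (i): $M_0(H)=H\subseteq G=M_0(G)$, and if $M_n(H)\subseteq M_n(G)$ then each generator $h\star k$, $[h,k]$ of $M_{n+1}(H)$ (with $h\in H$, $k\in M_n(H)\subseteq M_n(G)$) is a generator of $M_{n+1}(G)$, so $M_{n+1}(H)\subseteq M_{n+1}(G)$. For (ii): $f(M_0(G))=f(G)=G'$, and using $f(x\star y)=f(x)\star f(y)$, $f([x,y])=[f(x),f(y)]$ together with surjectivity, the generating set of $M_{n+1}(G)$ maps onto that of $M_{n+1}(G')$. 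Item (1) is now immediate: $M_n(H)\subseteq M_n(G)=\{1\}$. For item (2), the image of a homomorphism is a subalgebra on which the map is surjective, so (ii) gives $M_n(\mathrm{im}\,f)=f(M_n(G))=\{1\}$; quotients are a special case.

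For (3), if $M_n(G/H)=\{1\}$ then applying (ii) to the natural map $G\to G/H$ gives $M_n(G)\subseteq H\subseteq\mathcal{Z}(G)=LZ(G)\cap Z(G)$. Hence for $a\in G$ and $b\in M_n(G)$ we have $b\star a=1$ (as $b\in LZ(G)$), so $a\star b=1$ by the identity $(a\star b)(b\star a)=1$, and $[a,b]=1$ (as $b\in Z(G)$); therefore $M_{n+1}(G)=\{1\}$ and $G$ is $M\mathcal{Z}$-nilpotent. Item (4) follows by combining (2) (forward direction) with (3) applied to $H=\mathcal{Z}(G)$ (converse). For (5), assume $G\neq\{1\}$ and let $n\ge 1$ be its class; then $M_n(G)=\{1\}$ forces, exactly as in the previous computation, $M_{n-1}(G)\subseteq LZ(G)\cap Z(G)=\mathcal{Z}(G)$, and $M_{n-1}(G)\neq\{1\}$ yields $\mathcal{Z}(G)\neq\{1\}$.

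For (6) it suffices to treat a product of two algebras $G_1\times G_2$ with componentwise operations and to show by induction that $M_n(G_1\times G_2)=M_n(G_1)\times M_n(G_2)$: a generator $(a_1,a_2)\star(b_1,b_2)=(a_1\star b_1,\,a_2\star b_2)$ (and likewise the commutator), with $(b_1,b_2)\in M_n(G_1)\times M_n(G_2)$, lies in $M_{n+1}(G_1)\times M_{n+1}(G_2)$; conversely, taking one coordinate of $(b_1,b_2)$ to be $1$ shows that each of $M_{n+1}(G_1)\times\{1\}$ and $\{1\}\times M_{n+1}(G_2)$ is generated. Taking $n=\max$ of the two classes kills $M_n(G_1\times G_2)$, and the general finite case follows by iteration. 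Finally, for (7): from $G/H$ of class $n$ and $G/K$ of class $m$, fact (ii) gives $M_n(G)\subseteq H$ and $M_m(G)\subseteq K$; since $M_\bullet(G)$ is descending, $M_{\max(n,m)}(G)\subseteq H\cap K$, whence $M_{\max(n,m)}(G/(H\cap K))=\{1\}$ by (ii) applied to $G\to G/(H\cap K)$. (Alternatively, embed $G/(H\cap K)\hookrightarrow G/H\times G/K$ by $g(H\cap K)\mapsto(gH,gK)$ and invoke (6) together with (1).)

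The steps that require a little care rather than pure routine are: making explicit that $M_{n+1}(X)$ is the \emph{generated} subgroup of the displayed $\star$-products and commutators (so that triviality of $M_{n+1}$ is triviality of all generators), and, in item (6), the elementary observation that a subgroup of $G_1\times G_2$ generated by a family of elements closed under replacing either coordinate by the identity splits as a direct product. I expect item (6) — and the induced class bound in (7) — to be the main, though still modest, obstacle.
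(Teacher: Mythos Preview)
Your proposal is correct and matches what the paper intends: the paper does not give a proof of this proposition, remarking only that ``the proofs of the traditional results discussed in this section bear resemblance to the corresponding proofs for groups,'' and your argument is precisely the standard group-theoretic reduction via the inductive facts $M_n(H)\subseteq M_n(G)$ and $f(M_n(G))=M_n(G')$, adapted to the operator ${}^M[\,\cdot\,,\,\cdot\,]=(\cdot\star\cdot)[\,\cdot\,,\,\cdot\,]$. The only small caveat is that item (5) tacitly requires $G\neq\{1\}$, which you correctly made explicit.
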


\begin{theorem}
Let $H$ and $K$ be  two  $M\mathcal{Z}$-nilpotent ideals of $G$ of classes $n$ and $m$, respectively such that $\mathcal{Z}(H)$ and $\mathcal{Z}(K)$ are also ideals of $G.$  Then $HK$ is a $M\mathcal{Z}$-nilpotent ideal of $G$ of class at most $n + m.$
\end{theorem}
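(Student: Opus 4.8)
The statement is the multiplicative Lie algebra analogue of Fitting's theorem for groups, so the plan is to adapt the classical ``weight decomposition'' proof to the compound bracket ${}^{M}[\,\cdot\,,\,\cdot\,]=(\,\cdot\star\cdot\,)[\,\cdot\,,\,\cdot\,]$. First I would dispose of the easy half, that $HK$ is an ideal of $G$. Since $H,K\trianglelefteq G$, the set $HK=KH$ is a normal subgroup, and for $x\in G$ and $hk\in HK$ axioms (2) and (3) give $x\star(hk)=(x\star h)\,{}^{h}(x\star k)$ with $x\star h\in H$ and $x\star k\in K$ because $H,K$ are ideals; conjugating by elements of $HK$ keeps one inside $HK$, so $HK$ is a subalgebra and an ideal. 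This uses neither nilpotency nor the extra hypothesis.

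For the class bound, set $L=HK$; then $H$ and $K$ are ideals of $L$. The first technical step I would establish is an auxiliary lemma: for every $r$, the lower central term $M_r(H)$ (resp.\ $M_r(K)$) is an ideal of $G$, not merely of $H$ (resp.\ $K$). I would prove this by induction on $r$; the case $r=0$ is clear, and assuming $M_r(H)\trianglelefteq G$ is an ideal of $G$ contained in $H$, normality of $M_{r+1}(H)={}^{M}[H,M_r(H)]$ in $G$ comes from axiom (5) applied to its generators. Closure under $\star$ with $G$ is the delicate point: feeding a generator $h\star w$ ($h\in H$, $w\in M_r(H)$) together with an arbitrary $g\in G$ into the Jacobi axiom (4), the other two terms become $\star$-products of elements of $H$ with elements of $M_r(H)$, hence lie in $M_{r+1}(H)$, and as ${}^{h}g$ runs over all of $G$ this gives $(h\star w)\star g'\in M_{r+1}(H)$ for all $g'$; for the generators $[h,w]$ one uses $[h,w]\star g'=[(h\star w),g']$ together with the normality just shown.

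Next introduce, for $i,j\geq0$ with $i+j\geq1$, the subgroups $V_{i,j}\trianglelefteq L$ with $V_{1,0}=H$, $V_{0,1}=K$, where $V_{i,j}$ is generated by all iterated ${}^{M}$-brackets of total weight $i+j$ that use $i$ arguments from $H$ and $j$ from $K$, in every order and bracketing. The two assertions to prove, in parallel with the group case, are: (i) $M_{k-1}(L)=\prod_{i+j=k}V_{i,j}$ for all $k\geq1$, by induction on $k$, using that ${}^{M}[L,-]={}^{M}[HK,-]$ distributes over products of ideals (the commutator part as $[G,AB]=[G,A][G,B]$ for $A,B\trianglelefteq G$, the $\star$-part via axioms (2), (3)); and (ii) $V_{i,j}\subseteq M_{i-1}(H)$ whenever $i\geq1$, and $V_{i,j}\subseteq M_{j-1}(K)$ whenever $j\geq1$. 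I expect (ii) to be the main obstacle, precisely because ${}^{M}[\,\cdot\,,\,\cdot\,]$ bundles two operations: the structural induction on the bracket must track the $\star$-part and the commutator part simultaneously, using ${}^{(x\star y)}(u\star v)={}^{[x,y]}(u\star v)$ and $[(x\star y),z]=[x,y]\star z$ to reduce $\star$-contributions to commutator form. Otherwise the inductive step mirrors the group case: an innermost ${}^{M}[H,H]$ lies in $M_1(H)$, an innermost ${}^{M}[H,K]$ lies in $H\cap K$, and each further outer entry either raises the $H$-level (an $H$-entry against something in $M_r(H)$, via ${}^{M}[H,M_r(H)]=M_{r+1}(H)$) or is absorbed (a $K$-entry against something in $M_r(H)$: here one invokes the auxiliary lemma that $M_r(H)$ is an ideal of $G$, so ${}^{M}[K,M_r(H)]\subseteq M_r(H)$), and symmetrically with $H$ and $K$ interchanged.

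Granting (i) and (ii), the conclusion is immediate: $M_n(H)=M_m(K)=\{1\}$, so for any $i,j$ with $i+j=n+m+1$ either $i\geq n+1$, whence $V_{i,j}\subseteq M_{i-1}(H)\subseteq M_n(H)=\{1\}$, or $i\leq n$ and hence $j\geq m+1$, whence $V_{i,j}\subseteq M_{j-1}(K)\subseteq M_m(K)=\{1\}$; thus $M_{n+m}(L)=\prod_{i+j=n+m+1}V_{i,j}=\{1\}$, i.e.\ $HK$ is $M\mathcal{Z}$-nilpotent of class at most $n+m$. The hypothesis that $\mathcal{Z}(H)$ and $\mathcal{Z}(K)$ be ideals of $G$ is what makes available the alternative, less combinatorial route: one induces on $n+m$ and reduces modulo $\mathcal{Z}(K)$ (and $\mathcal{Z}(H)$) --- legitimate exactly because these are ideals of $G$ --- to lower the classes, and in that approach one shows $M_{n+m-1}(HK)\subseteq\mathcal{Z}(H)\cap\mathcal{Z}(K)\subseteq LZ(HK)\cap Z(HK)$ and annihilates it with a single further application of ${}^{M}[HK,-]$.
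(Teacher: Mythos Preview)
The paper does not supply a proof of this theorem: it is listed among the ``traditional results'' for which the authors say, just beforehand, that ``the proofs \ldots\ bear resemblance to the corresponding proofs for groups.'' Your proposal is therefore strictly more detailed than what the paper offers, and it is a sound adaptation of the group-theoretic Fitting argument to the compound bracket ${}^{M}[\,\cdot\,,\,\cdot\,]$.

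The second route you sketch at the end --- induction on $n+m$, passing to the quotients by $\mathcal{Z}(K)$ and $\mathcal{Z}(H)$ --- is almost certainly the argument the authors have in mind, since it is precisely this quotient step that requires $\mathcal{Z}(K)$ to be an ideal of $G$. In pure group theory $Z(K)$ is characteristic in the normal subgroup $K$ and hence automatically normal in $G$, whereas $\mathcal{Z}(K)=LZ(K)\cap Z(K)$ carries no such guarantee in the multiplicative Lie setting; this explains the otherwise unmotivated extra hypothesis in the statement. Your primary, weight-decomposition route instead rests on your auxiliary lemma that each $M_r(H)$ is already an ideal of $G$ whenever $H$ is, and your Jacobi computation for that lemma (together with the identity $[x,y]\star z=[(x\star y),z]$ for the commutator generators) is correct. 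Granting the combinatorics of your step~(ii), which mirror the group case once one reduces $M_{k-1}(HK)$ to left-normed ${}^{M}$-brackets via axioms~(2) and~(3), your first route actually proves the theorem \emph{without} the hypothesis on $\mathcal{Z}(H)$ and $\mathcal{Z}(K)$ --- a mild strengthening the paper does not claim.
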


\begin{remark}
	Let $G$ be a  multiplicative Lie algebra with $ (G\star G) \subseteq Z(G) $.Then 
	
	\begin{enumerate}
		\item $(a^{-1}\star b) = (a\star b^{-1}) = (a\star b)^{-1}$ for all $a,b \in G.$
		
		\item $(a^n\star b) = (a\star b^n) = (a\star b)^n$ for all $a,b \in G, n \in \Z.$
	\end{enumerate}

In particular, if $G$ is $M\mathcal{Z}$-nilpotent multiplicative Lie algebra of class $2$, then 

	\begin{enumerate}
	\item $(a^{-1}\star b) = (a\star b^{-1}) = (a\star b)^{-1}$ for all $a,b \in G.$
	
	\item $(a^n\star b) = (a\star b^n) = (a\star b)^n$ for all $a,b \in G, n \in \Z.$
   \end{enumerate}
\end{remark}

Define $M^n(G)$ inductively as follows: $M^0(G) = G.$ Assuming that $M^n(G)$ has already been defined, define $M^{n+1}(G) = {^M[M^n(G),M^n(G)]}.$ This gives us a descending chain:
\begin{align*}
G = M^0(G)\unrhd M^1(G)\unrhd M^2(G)\unrhd \cdots \unrhd M^n(G) \unrhd \cdots
\end{align*}
of ideals of $G$ which is termed as the derived series of $G.$

\begin{definition}
	 A multiplicative Lie algebra $G$ is said to be  $M$-solvable if the
	derived series of $G$ terminates to $\{1\}$ after finitely many terms. The smallest $n$ such
	that $M^n(G) = \{1\}$ is called the derived length and the series
	$$G = M^0(G)\unrhd M^1(G)\unrhd M^2(G)\unrhd \cdots \unrhd M^n(G) = \{1\}$$ is called the $M$-solvable series of $G.$
\end{definition}

\begin{remark}
	Any $M\mathcal{Z}$-nilpotent multiplicative Lie algebra is $M$-solvable. But the converse need not be true.
\end{remark}

\begin{example}
Consider the symmetric group $S_3$ with improper multiplicative Lie algebra. Then it is  $M$-solvable but not  $M\mathcal{Z}$-nilpotent.  
\end{example}

\section{Capable multiplicative Lie algebras}

In this section, we introduce the concept of capability in multiplicative Lie algebras. After demonstrating that covers exist in each multiplicative Lie algebra, we tie capability to covers and Schur multipliers. Finally, we give connections among perfect multiplicative Lie algebras, covers and Schur multipliers.  

\begin{definition}
	\begin{enumerate}
		\item A  multiplicative Lie algebra $(G,\cdot,\star)$ is called  Lie capable if there exists a  multiplicative Lie algebra $(E,\cdot,\star)$ such that $G \cong \frac{E}{\mathcal{Z}(E)}.$
		
		\item A  multiplicative Lie algebra $G$ is called  multiplicative capable if there exists a  multiplicative Lie algebra $E$ such that  $G \cong \frac{E}{\mathcal{Z}(E)}$ with $ \mathcal{Z}(E) = {MZ}(E).$
		
		\item A  multiplicative Lie algebra  $G$ is called  Lie unicentral if the center of every central extension of $G$ maps onto $\mathcal{Z}(G).$
	\end{enumerate}
	
\end{definition}

\begin{remark}
	Every group $G$ that is capable is also Lie capable and multiplicative capable under the trivial multiplicative Lie algebra structure.
\end{remark}

\begin{example}
\begin{enumerate}
\item Since there is no multiplicative Lie algebra structure on an abelian group $G$ such that $\frac{G}{LZ(G)}$ is cyclic, non-trivial cyclic group is not Lie capable.

\item  Consider the Klein four group $V_4 = \langle a, b \mid a^2 = b^2 = 1, ab = ba \rangle$ with the multiplicative Lie algebra structure defined by $a \star b = b$. In this case, we observe that both the Lie center and the multiplicative Lie center of $V_4$ are trivial, denoted by $\mathcal{Z}(V_4) = {MZ}(V_4) = {1}$. As a result, we conclude that $V_4$ is both Lie capable and multiplicative capable.
\end{enumerate}

\end{example}

Here we give a necessary condition for a multiplicative Lie algebra to be multiplicative capable:
\begin{proposition}
If $G$ is multiplicative capable multiplicative Lie algebra  and the factor group $\frac{G}{^M[G,G]}$  is of finite exponent, then $\mathcal{Z}(G)$ is bounded and exponent of $\mathcal{Z}(G)$ divides that of $\frac{G}{^M[G,G]}.$ 
\end{proposition}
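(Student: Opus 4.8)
The plan is to adapt the classical argument showing that a capable group $G$ with $G/[G,G]$ of finite exponent $n$ has center of exponent dividing $n$, carrying the extra Lie product along. Write $G\cong E/\mathcal{Z}(E)$ and let $\pi\colon E\to G$ be the quotient map, so $\ker\pi=\mathcal{Z}(E)=LZ(E)\cap Z(E)$ is an abelian subgroup of $Z(E)$. Let $e$ be the exponent of $\frac{G}{{}^M[G,G]}$, so $g^{e}\in{}^M[G,G]$ for every $g\in G$. Fix $z\in\mathcal{Z}(G)$ and a lift $\tilde z\in E$; the goal is to prove $\tilde z^{\,e}\in\mathcal{Z}(E)$, which forces $z^{e}=1$.

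First I would introduce two auxiliary homomorphisms. Since $z\in Z(G)$, the commutator $[\tilde z,\tilde g]$ lies in $\ker\pi=\mathcal{Z}(E)\subseteq Z(E)$ for all $\tilde g\in E$; centrality of these values makes $\tilde g\mapsto[\tilde z,\tilde g]$ a group homomorphism $E\to\mathcal{Z}(E)$ vanishing on $\ker\pi$, hence it descends to $\bar\chi\colon G\to\mathcal{Z}(E)$. Symmetrically, since $z\in LZ(G)$, the element $\tilde z\star\tilde g$ lies in $\mathcal{Z}(E)$ for all $\tilde g$, and the identity $\tilde z\star(\tilde g\tilde h)=(\tilde z\star\tilde g)\,{}^{\tilde g}(\tilde z\star\tilde h)$ together with centrality makes $\tilde g\mapsto\tilde z\star\tilde g$ a homomorphism descending to $\bar\psi\colon G\to\mathcal{Z}(E)$. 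As the targets are abelian, both maps vanish on $[G,G]$; in particular $\tilde z\star[\tilde a,\tilde b]=1$ for all $\tilde a,\tilde b\in E$, and then, using $[(\tilde a\star\tilde b),\tilde z]=[\tilde a,\tilde b]\star\tilde z$ and the antisymmetry of $\star$, also $[\tilde z,\tilde a\star\tilde b]=1$.

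The crux, and the step I expect to be the main obstacle, is to prove $\tilde z\star(\tilde a\star\tilde b)=1$ for all $\tilde a,\tilde b\in E$. Combined with the above, this makes $\bar\chi$ and $\bar\psi$ vanish on all generators $a\star b$ of $G\star G$, hence on ${}^M[G,G]=(G\star G)[G,G]$, so both factor through $\frac{G}{{}^M[G,G]}$. I would obtain it from the Jacobi identity evaluated at $(\tilde z,\tilde a,\tilde b)$: its first and third terms vanish because $\tilde z\star\tilde a$ and $\tilde z\star\tilde b$, and hence their inverses, lie in $LZ(E)$, so the surviving middle term yields $(\tilde a\star\tilde b)\star{}^{\tilde b}\tilde z=1$. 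Writing ${}^{\tilde b}\tilde z=\tilde z\,[\tilde z^{-1},\tilde b]$ and noting $c:=[\tilde z^{-1},\tilde b]$ maps to $[z^{-1},b]=1$, so $c\in\ker\pi\subseteq LZ(E)$, the expansion $(\tilde a\star\tilde b)\star(\tilde z c)=((\tilde a\star\tilde b)\star\tilde z)\,{}^{\tilde z}((\tilde a\star\tilde b)\star c)$ collapses (its $c$-factor is trivial since $c\in LZ(E)$) to $(\tilde a\star\tilde b)\star\tilde z$. Hence $(\tilde a\star\tilde b)\star\tilde z=1$, i.e.\ $\tilde z\star(\tilde a\star\tilde b)=1$.

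With $\bar\chi$ and $\bar\psi$ factoring through $\frac{G}{{}^M[G,G]}$, which has exponent $e$, we get $\bar\chi(g)^{e}=\bar\chi(g^{e})=1$ and $\bar\psi(g)^{e}=1$ for all $g\in G$. Using the identity $(\tilde z\tilde z)\star\tilde g={}^{\tilde z}(\tilde z\star\tilde g)\,(\tilde z\star\tilde g)$ and centrality of $\tilde z\star\tilde g$ one gets $\tilde z^{\,e}\star\tilde g=(\tilde z\star\tilde g)^{e}=\bar\psi(\pi(\tilde g))^{e}=1$, and similarly $[\tilde z^{\,e},\tilde g]=[\tilde z,\tilde g]^{e}=\bar\chi(\pi(\tilde g))^{e}=1$, for every $\tilde g\in E$. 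Therefore $\tilde z^{\,e}\in LZ(E)\cap Z(E)=\mathcal{Z}(E)=\ker\pi$, so $z^{e}=\pi(\tilde z^{\,e})=1$. Since $z\in\mathcal{Z}(G)$ was arbitrary, $\mathcal{Z}(G)$ is bounded and its exponent divides that of $\frac{G}{{}^M[G,G]}$. Aside from the Jacobi manipulation, every step is routine bookkeeping with $\bar\chi,\bar\psi$ and with the power rules for $\star$ and $[\,,\,]$ against central values.
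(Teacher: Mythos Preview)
Your argument is correct and in fact proves more than the paper does: you never invoke the extra hypothesis $\mathcal{Z}(E)=MZ(E)$ that defines \emph{multiplicative} capability, so your proof goes through for any Lie capable $G$ with $\frac{G}{{}^M[G,G]}$ of finite exponent, simultaneously sharpening the paper's Propositions on multiplicative and Lie capability. The paper instead fixes a section $t$ and packages both commutator and Lie product into a single bihomomorphism $\gamma\colon\mathcal{Z}(G)\times G\to\mathcal{Z}(E)$, $\gamma(x,y)=[t(x),t(y)]\,(t(y)\star t(x))$; it then asserts ${}^M[G,G]\subseteq\ker_R(\gamma)$ and uses the hypothesis $\mathcal{Z}(E)=MZ(E)$ precisely to force $\ker_L(\gamma)=1$, since $x\in\ker_L(\gamma)$ exactly means $t(x)\in MZ(E)$. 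Your separation into $\bar\chi$ and $\bar\psi$ avoids this: by treating the group commutator and the Lie product independently you obtain $\tilde z^{\,e}\in Z(E)$ and $\tilde z^{\,e}\in LZ(E)$ separately, so $\tilde z^{\,e}\in\mathcal{Z}(E)$ with no appeal to $MZ(E)$. The price is the explicit Jacobi computation showing $\tilde z\star(\tilde a\star\tilde b)=1$, a step the paper glosses over when claiming $(G\star G)\subseteq\ker_R(\gamma)$; your treatment of that point is cleaner and more complete.
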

\begin{proof}
Suppose $G \cong \frac{E}{\mathcal{Z}(E)}$ for some  multiplicative Lie algebra $E$ with $ \mathcal{Z}(E) = {MZ}(E).$  Thus we have a central extension
$$\xi: 1 \longrightarrow \mathcal{Z}(E) \longrightarrow E \overset{\phi}\longrightarrow G\longrightarrow 1$$ of $G.$ Let $t$ be a section of $\xi$ satisfying $(\phi \circ t)(x)=x$ for all $x\in G$, and $t(1)=1.$ If $s$ is another section of $\xi$, then $s(x)=z_x t(x)$ for some $z_x \in \mathcal{Z}(E). $ Thus $[t(x),t(y)](t(x')\star t(y'))=[s(x),s(y)](s(x')\star s(y')).$ The Lie commutators are independent of the chosen section. Thus we have a coordinatewise group homomorphism $\gamma:\mathcal{Z}(G)\times G \to\mathcal{Z}(E) $ given by $\gamma(x,y)=[t(x),t(y)](t(y)\star t(x)).$ Fix $x \in \mathcal{Z}(G)$ then we have a group homomorphism $\gamma_x: G \to \mathcal{Z}(E) $ given by $\gamma_x(y)=[t(x),t(y)](t(y)\star t(x)).$ The right kernel of $\gamma$ is $\ker_R(\gamma)=\cap_{x\in \mathcal{Z}(G)}\ker(\gamma_x).$ Since $[G,G]\subseteq \ker_R(\gamma) $ and $(G\star G)\subseteq \ker_R(\gamma) $, we have  $^M[G,G]\subseteq \ker_R(\gamma). $
Since exponent of $\frac{G}{^M[G,G]}$ is finite, say $n$, we have $\gamma(x,\bar y^n)=\gamma(x,\bar y)^n=\gamma(x^n,\bar y)=1$ for all $x \in \mathcal{Z}(G), y\in G, n\in \Z.$ This implies that $x^n \in \ker_L(\gamma) $, the left kernel of $\gamma.$ But since $\mathcal{Z}(E) = {MZ}(E)$, we have $\ker_L(\gamma)= \{x\in \mathcal{Z}(G)| [t(x),t(y)](t(y)\star t(x)) = 1$ for all $y\in G  \} = 1$ and so $x^n=1.$ Hence the result follows. 
\end{proof}

The following is a necessary condition for a multiplicative Lie algebra to be Lie
capable:
\begin{proposition}\label{Lie capable}
	If $G$ is Lie capable and $\frac{G}{G\star G}$ is of a finite exponent. Then $\mathcal{Z}(G)$ is bounded and exponent of $\mathcal{Z}(G)$ divides that of $\frac{G}{^M[G,G]}.$  
\end{proposition}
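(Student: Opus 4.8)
The plan is to mimic the proof of the previous proposition (the necessary condition for multiplicative capability), but now working only with the Lie-capability hypothesis $G\cong E/\mathcal{Z}(E)$ without the extra assumption $\mathcal{Z}(E)=MZ(E)$. So first I would fix a multiplicative Lie algebra $E$ with $G\cong E/\mathcal{Z}(E)$, giving a central extension
\[
\xi: 1 \longrightarrow \mathcal{Z}(E) \longrightarrow E \overset{\phi}\longrightarrow G \longrightarrow 1,
\]
and choose a normalized set-theoretic section $t$ with $\phi\circ t=\mathrm{id}$ and $t(1)=1$. As in the preceding proof, the expression $[t(x),t(y)](t(y)\star t(x))$ is independent of the choice of section (two sections differ by a central element, and central elements drop out of both the commutator and the $\star$-product by Proposition 2.3 together with the bilinearity identities), so it defines a well-defined map on $G\times G$ landing in $\mathcal{Z}(E)$.

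Next I would record the relevant homomorphism property: fixing $x$, the map $\gamma_x(y)=[t(x),t(y)](t(y)\star t(x))$ is a group homomorphism $G\to\mathcal{Z}(E)$, using identities (2) and (3) in the definition of a multiplicative Lie algebra for the $\star$ part and the standard central-extension commutator calculus for the $[\,,\,]$ part; similarly the map is a homomorphism in the first variable. Then I would identify the right kernel $\ker_R(\gamma)=\bigcap_{x}\ker(\gamma_x)$ and observe that it contains both $[G,G]$ and $G\star G$, hence contains ${}^M[G,G]$. Assuming the exponent of $G/(G\star G)$ is finite — and noting that $G\star G\subseteq {}^M[G,G]$, so the exponent of $G/{}^M[G,G]$ divides that of $G/(G\star G)$ and in particular is finite, say $n$ — biadditivity gives $\gamma(x^n,\bar y)=\gamma(x,\bar y)^n=\gamma(x,\bar y^n)=1$ for all $y$, so $x^n$ lies in the left kernel $\ker_L(\gamma)$.

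The final and genuinely different step is to show $\ker_L(\gamma)=\{1\}$, which is where the previous proof used $\mathcal{Z}(E)=MZ(E)$; here that is unavailable, so this is the main obstacle. The point should be that $x\in\ker_L(\gamma)$ means $[t(x),t(y)](t(y)\star t(x))=1$, i.e. $t(x)\star t(y)=[t(x),t(y)]$ for all $y\in E$, which together with $t(x)$ covering $x\in\mathcal{Z}(G)=LZ(G)\cap Z(G)$ should force $t(x)\in\mathcal{Z}(E)$: indeed $x\in Z(G)$ lifts (modulo the central $\mathcal{Z}(E)$) to $[t(x),t(y)]=1$, hence $t(x)\star t(y)=1$ for all $y$, so $t(x)\in LZ(E)$; combined with $[t(x),t(y)]=1$ this gives $t(x)\in Z(E)$, so $t(x)\in\mathcal{Z}(E)$ and therefore $x=\phi(t(x))=1$. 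Thus $\ker_L(\gamma)=\{1\}$, forcing $x^n=1$ for every $x\in\mathcal{Z}(G)$, which proves $\mathcal{Z}(G)$ is bounded of exponent dividing that of $G/{}^M[G,G]$. I would also double-check the seam between ``$G/(G\star G)$ of finite exponent'' in the hypothesis and ``exponent of $G/{}^M[G,G]$'' in the conclusion, since one must verify that $x^n$ landing in $\ker_L(\gamma)$ is exactly what is needed and that the divisibility statement is the sharp one the hypothesis yields.
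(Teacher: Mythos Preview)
Your approach diverges from the paper's in the choice of the bilinear pairing: the paper does \emph{not} carry over the combined map $[t(x),t(y)](t(y)\star t(x))$ from the previous proposition, but instead works with the simpler $\gamma(x,y)=t(x)\star t(y)$ alone. With this $\gamma$ only $G\star G$ is needed inside $\ker_R(\gamma)$, which matches the hypothesis directly; the left-kernel condition then reads $t(x)\star t(y)=1$ for all $y$, i.e.\ $t(x)\in LZ(E)$, and the paper asserts $\ker_L(\gamma)=1$ from there. (The remark immediately following the proof records the parallel version using $\gamma(x,y)=[t(x),t(y)]$.)

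Your argument, as written, has a genuine gap at the crucial step. From $x\in\ker_L(\gamma)$ you obtain $t(x)\star t(y)=[t(x),t(y)]$ for all $y$, which says precisely that $t(x)\in MZ(E)$. You then claim that $x\in Z(G)$ ``lifts (modulo the central $\mathcal{Z}(E)$) to $[t(x),t(y)]=1$'', but this is not so: $x\in Z(G)$ only yields $[t(x),t(y)]\in\ker\phi=\mathcal{Z}(E)$, not $[t(x),t(y)]=1$ in $E$. Consequently you cannot conclude $t(x)\star t(y)=1$, nor $t(x)\in LZ(E)$, nor $t(x)\in Z(E)$; the chain of implications collapses right at its first link. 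In effect, what your left-kernel condition gives is $t(x)\in MZ(E)$, and without the extra hypothesis $\mathcal{Z}(E)=MZ(E)$ (which was exactly what distinguished the previous proposition) there is no reason this forces $t(x)\in\mathcal{Z}(E)$. If you want to persist with your combined $\gamma$, you must find a different way to close this gap; otherwise, switching to the paper's uncoupled $\gamma(x,y)=t(x)\star t(y)$ (or $[t(x),t(y)]$) is the cleaner route.
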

\begin{proof}
Consider the central extension
$$ 1 \longrightarrow \mathcal{Z}(E) \longrightarrow E \overset{\phi}\longrightarrow G\longrightarrow 1$$ of $G.$ Let $t$ be a section satisfying $(\phi \circ t)(x)=x$ for all $x\in G$, and $t(1)=1.$ If $s$ is another section , then $s(x)=z_x t(x)$ for some $z_x \in \mathcal{Z}(E). $ Thus $t(x)\star t(y)=s(x)\star s(y).$ The Lie commutators are independent of the chosen section.	 Thus we have a coordinatewise group homomorphism $\gamma:\mathcal{Z}(G)\times G \to\mathcal{Z}(E) $ given by $\gamma(x,y)=(t(x)\star t(y)).$ Fix $x \in \mathcal{Z}(G)$ then we have a group homomorphism $\gamma_x: G \to \mathcal{Z}(E) $ given by $\gamma_x(y)=t(x)\star t(y).$ The right kernel of $\gamma$ is $\ker_R(\gamma)=\cap_{x\in G}\ker(\gamma_x).$ Since $(G\star G)\subseteq \ker_R(\gamma) $ 
and exponent of $\frac{G}{G\star G}$ is finite, say $n$, we have $\gamma(x,\bar y^n)=\gamma(x,\bar y)^n=\gamma(x^n,\bar y)=1$ for all $x \in  \mathcal{Z}(G), y\in G, n\in \Z.$ This implies that $x^n \in \ker_L(\gamma) $, the left kernel of $\gamma.$ But $\ker_L(\gamma)=1$ and so $x^n=1.$ Hence the result follows. 
\end{proof}
\begin{remark}
	\begin{enumerate}
		\item 	If we replace ${G\star G}$ with $[G,G]$ in Proposition \ref{Lie capable}, the result remains valid. Furthermore, we can define $\gamma:\mathcal{Z}(G)\times G \to\mathcal{Z}(E)$ as $\gamma(x,y)=[t(x),t(y)]$.
		
		\item If there exists a multiplicative Lie algebra $E$ such that $G \cong \frac{E}{\mathcal{Z}(E)}$, and the condition $(E\star E) \subseteq Z(E)$ holds, then it follows that $[E,E] \subseteq LZ(E)$.
		\end{enumerate}
\end{remark}

 Now, we define a central ideal $\mathcal{Z}^*(G)$ in $G$ such that $\frac{G}{\mathcal{Z}^*(G)}$ is Lie capable. Furthermore, we establish that $G$ is Lie capable if and only if $\mathcal{Z}^*(G)$ equals the identity element.
 
\begin{definition}
A multiplicative Lie algebra $G$ is defined as Lie unicentral if its central extensions satisfy the condition $\mathcal{Z}^*(G) = \mathcal{Z}(G)$. Here $\mathcal{Z}^*(G)$ is obtained by taking the intersection of the images of the centers of all central extensions $E$ of $G$ through a homomorphism $\phi$ in the short exact sequence $1 \longrightarrow \ker(\phi) \longrightarrow E \overset{\phi}\longrightarrow G\longrightarrow 1$, i. e.
 $\mathcal{Z}^*(G)=\cap\{\phi(\mathcal{Z}(E))\ |\  1 \longrightarrow \ker(\phi) \longrightarrow E \overset{\phi}\longrightarrow G\longrightarrow 1$ is a central extension of $G \}.$ 
\end{definition}
In order to demonstrate that $\mathcal{Z}^*(G)$ is the smallest ideal of $G$ for which $\frac{G}{\mathcal{Z}^*(G)}$ is Lie capable,  we require the following proposition.
\begin{proposition}\label{C2}
	Let $\{N_i\}_{i\in I}$ be a family of ideals of a multiplicative Lie algebra $G.$
	If for each $i\in I,$ the factor multiplicative Lie algebra $\frac{G}{N_i}$  is Lie capable  , then so is $\frac{G}{\cap_{i\in I} N_i}$.
\end{proposition}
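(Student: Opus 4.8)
The plan is to reduce to the case of two ideals and then argue by constructing a suitable central extension of $\frac{G}{N_1\cap N_2}$ out of the given central extensions of $\frac{G}{N_1}$ and $\frac{G}{N_2}$. First I would observe that, since $\mathcal{Z}^*$ is defined as an intersection of images of centers over all central extensions, the statement ``$\frac{G}{N}$ is Lie capable'' is equivalent to ``$\mathcal{Z}^*(\frac{G}{N}) = \{1\}$'', which by the correspondence theorem for ideals is equivalent to ``$\mathcal{Z}^*(G) \subseteq N$'' once one checks the behaviour of $\mathcal{Z}^*$ under quotients. So the cleanest route is: show $\mathcal{Z}^*\!\left(\frac{G}{N}\right) = \frac{\mathcal{Z}^*(G)N}{N}$ (or at least the inclusion we need), and then the proposition becomes the set-theoretic triviality $\bigcap_{i} N_i \supseteq \mathcal{Z}^*(G)$ whenever each $N_i \supseteq \mathcal{Z}^*(G)$.

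Concretely, the key steps in order: (1) By induction it suffices to treat $I = \{1,2\}$, i.e. to show that if $\frac{G}{N_1}$ and $\frac{G}{N_2}$ are Lie capable then so is $\frac{G}{N_1\cap N_2}$. (2) Take central extensions $1 \to A_j \to E_j \overset{\phi_j}\to \frac{G}{N_j} \to 1$ realizing Lie capability, i.e. with $\mathcal{Z}(E_j)$ mapping onto $\mathcal{Z}\!\left(\frac{G}{N_j}\right)$ and, crucially for the capability witness, $A_j \subseteq \mathcal{Z}(E_j)$ chosen so that $\frac{E_j}{\mathcal{Z}(E_j)} \cong \frac{G}{N_j}$. (3) Form the pullback (fibre product) $E = E_1 \times_{G/(N_1N_2)} E_2$ over the common quotient $\frac{G}{N_1 N_2}$; since the pullback of a central extension is central and the pullback of two central extensions of the same base is again central, $E$ carries a natural multiplicative Lie algebra structure (coordinatewise $\star$) and sits in a central extension of $\frac{G}{N_1 \cap N_2}$, using that $\frac{G}{N_1\cap N_2}$ embeds into $\frac{G}{N_1}\times\frac{G}{N_2}$ with image the fibre product over $\frac{G}{N_1N_2}$. (4) Verify $\mathcal{Z}(E)$ maps onto $\mathcal{Z}\!\left(\frac{G}{N_1\cap N_2}\right)$, so that after dividing $E$ by $\mathcal{Z}(E)$ we recover $\frac{G}{N_1\cap N_2}$, establishing Lie capability. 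Finally deduce the general indexed case from the two-ideal case by a standard directed-intersection / Zorn-type argument (the conclusion for arbitrary $I$ follows since $\mathcal{Z}^*(G)$ is a single ideal contained in every $N_i$, hence in $\bigcap_i N_i$).

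I expect the main obstacle to be step (3)–(4): checking that the fibre product $E$ really is a \emph{central} extension of $\frac{G}{N_1\cap N_2}$ in the sense required (kernel inside $Z(E)\cap LZ(E)$), and more delicately that $\mathcal{Z}(E) = LZ(E)\cap Z(E)$ maps \emph{onto} $\mathcal{Z}\!\left(\frac{G}{N_1\cap N_2}\right)$ rather than merely into it. The group-theoretic part of this (center of a fibre product) is classical, but here one must simultaneously control the Lie center $LZ$, and the interaction of $LZ$ with fibre products is exactly where the multiplicative Lie identities (Proposition 2.3, and the fact that $G\star G$ and $[G,G]$ behave well under the coordinatewise $\star$) have to be invoked carefully. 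A secondary, lighter obstacle is pinning down the precise functoriality statement $\mathcal{Z}^*(G/N) = \mathcal{Z}^*(G)N/N$; if that turns out to be awkward to prove directly, the fibre-product argument above can be run without it, at the cost of a slightly longer verification in step (4). Either way, once the two-ideal case is in hand the passage to an arbitrary family is routine and I would not belabor it.
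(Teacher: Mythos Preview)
Your fibre-product construction and the verification that $\mathcal{Z}(E)$ equals the product of the $\mathcal{Z}(E_i)$ is exactly what the paper does---that is the heart of the argument, and your identification of this as the main obstacle is correct. However, the reduction to two ideals introduces a genuine gap when $I$ is infinite.

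Induction from the two-ideal case only handles finite families. Your proposed extension to arbitrary $I$ via $\mathcal{Z}^*$ does not work, for two reasons. First, the formula $\mathcal{Z}^*(G/N)=\mathcal{Z}^*(G)N/N$ is false: take $G=V_4$ with trivial $\star$ (Lie capable, so $\mathcal{Z}^*(G)=1$) and $N$ a subgroup of order $2$; then $G/N\cong\mathbb{Z}/2$ is cyclic, hence not Lie capable, so $\mathcal{Z}^*(G/N)=G/N\neq 1=\mathcal{Z}^*(G)N/N$. Second, and more structurally, the implication ``$\mathcal{Z}^*(G)\subseteq\bigcap_i N_i$ hence $G/\bigcap_i N_i$ is Lie capable'' is precisely the content of Corollary~\ref{C3} (that $G/\mathcal{Z}^*(G)$ is Lie capable), whose proof \emph{invokes} Proposition~\ref{C2}. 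So the $\mathcal{Z}^*$ route is circular in the paper's logical order, and the same counterexample shows the implication cannot be established independently: containing $\mathcal{Z}^*(G)$ does not force capability of the quotient.

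The fix is painless and is what the paper does: run your fibre-product argument once for the entire family. Set
\[
E=\Bigl\{(e_i)\in\textstyle\prod_{i\in I}E_i \ \Big|\ \exists\, g\in G\ \forall i\in I:\ \phi_i(e_i)=gN_i\Bigr\},
\]
a subalgebra of $\prod_i E_i$. One checks $\prod_i\mathcal{Z}(E_i)\subseteq\mathcal{Z}(E)$ directly, and for the reverse inclusion, given $(z_i)\in\mathcal{Z}(E)$ and any $e_j\in E_j$, lift $\phi_j(e_j)$ to each $E_i$ to build a tuple in $E$ against which $(z_i)$ must commute and $\star$-commute; this forces $z_j\in\mathcal{Z}(E_j)$. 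Then the map $E\to G/\bigcap_i N_i$, $(e_i)\mapsto g\bigcap_i N_i$, is a well-defined surjection with kernel exactly $\prod_i\mathcal{Z}(E_i)=\mathcal{Z}(E)$, giving $E/\mathcal{Z}(E)\cong G/\bigcap_i N_i$. No induction, no $\mathcal{Z}^*$, and the verification you flagged as delicate (simultaneous control of $Z$ and $LZ$ in the fibre product) is identical to the two-ideal case.
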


\begin{proof}
	For each $i\in I$, we have a central extension of $\frac{G}{ N_i}$
	$$ 1 \longrightarrow \mathcal{Z}(E_i) \longrightarrow E_i \overset{\phi_i}\longrightarrow \frac{G}{ N_i}\longrightarrow 1.$$
	
	From the pull back $G \times_{\frac{G}{N_i}} E_i = \{(g,e_i)\in G \times E_i \ | \ \phi_i(e_i)=gN_i \}$ which is a subalgebra of $G \times E_i.$ Set $E = \{(e_i)\in \prod_{i\in I}^{}E_i \ | \ \exists g\in G, \forall i\in I, \phi_i(e_i)=gN_i  \}$ which is a multiplicative Lie algebra with respect to coordinatewise operations. It is clear that $\prod_{i\in I}^{}\mathcal{Z}(E_i) \subseteq \mathcal{Z}(E).$ To show $\mathcal{Z}(E) \subseteq \prod_{i\in I}^{}\mathcal{Z}(E_i)$, assume that $(z_i)\in \mathcal{Z}(E) $ and $j\in I.$ Consider an arbitrary element $e_j \in E_j$, then for each $i\in I$ there exists  $e_i \in E_i$ such that $\phi_i(e_i)=\phi_j(e_j).$ This implies $(e_i)\in E$ and so $(e_i\star z_i)=(e_i)\star(z_i)= 1 $, and $(e_iz_i) = (z_ie_i).$ It follows that $e_j\in \mathcal{Z}(E_j)$, $(z_i)\in \prod_{i\in I}^{}\mathcal{Z}(E_i)$ and so  $\mathcal{Z}(E) = \prod_{i\in I}^{}\mathcal{Z}(E_i).$ Thus, we have a multiplicative Lie algebra isomorphism  $\frac{G}{\cap_{i\in I} N_i} \cong \frac{E}{\mathcal{Z}(E) }$ given by $gN \to e_{g,i}\mathcal{Z}(E)$ such that $\phi_i(e_{g,i}) = g\mathcal{Z}(E_i)$ for each $i \in I.$ Hence $\frac{G}{\cap_{i\in I} N_i}$ is Lie capable. 
	
\end{proof}

The following corollary shows that $\mathcal{Z}^*(G)$ is the smallest ideal of $G$ and $\frac{G}{\mathcal{Z}^*(G)}$ is Lie capable. 

\begin{corollary}\label{C3}
	 Let $G$ be a  multiplicative Lie algebra. Then $\frac{G}{\mathcal{Z}^*(G)}$ is Lie capable. If $\{N_i\}_{i\in I}$ be a family of all ideals of  $G$ such that $\frac{G}{N_i}$  is Lie capable. Then  $\mathcal{Z}^*(G) = \cap_{i\in I} N_i.$  
\end{corollary}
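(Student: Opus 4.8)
The plan is to split the statement of Corollary \ref{C3} into its two assertions and handle them in turn, relying on Proposition \ref{C2} for the substantive part. First I would recall the definition of $\mathcal{Z}^*(G)$ as the intersection $\cap\{\phi(\mathcal{Z}(E))\}$ taken over all central extensions $1\to \ker(\phi)\to E\xrightarrow{\phi} G\to 1$, and the definition of the family $\{N_i\}_{i\in I}$ as the collection of \emph{all} ideals $N$ of $G$ for which $G/N$ is Lie capable. The goal is the equality $\mathcal{Z}^*(G)=\cap_{i\in I}N_i$, from which the first assertion ($G/\mathcal{Z}^*(G)$ is Lie capable) follows immediately once we know $\mathcal{Z}^*(G)$ itself is one of the $N_i$, i.e.\ that it lies in the indexing family; that in turn is exactly Proposition \ref{C2} applied to the family of all the $N_i$.

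Concretely I would argue as follows. For the inclusion $\cap_{i\in I}N_i \subseteq \mathcal{Z}^*(G)$: given any central extension $1\to K\to E\xrightarrow{\phi}G\to 1$, the quotient $G/\phi(\mathcal{Z}(E))$ is Lie capable because $E/\mathcal{Z}(E)\cong G/\phi(\mathcal{Z}(E))$ (one checks $\phi(\mathcal{Z}(E))$ is an ideal of $G$ and that $\phi$ induces this isomorphism, using that $K\subseteq \mathcal{Z}(E)$ since the extension is central). Hence $\phi(\mathcal{Z}(E))$ is one of the $N_i$, so $\cap_{i\in I}N_i\subseteq \phi(\mathcal{Z}(E))$; intersecting over all central extensions gives $\cap_{i\in I}N_i\subseteq \mathcal{Z}^*(G)$. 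For the reverse inclusion $\mathcal{Z}^*(G)\subseteq \cap_{i\in I}N_i$: fix any $i\in I$, so $G/N_i\cong E_i/\mathcal{Z}(E_i)$ for some multiplicative Lie algebra $E_i$; pulling back along $G\to G/N_i$ produces a central extension $1\to \ker(\psi_i)\to \widetilde{E_i}\xrightarrow{\psi_i} G\to 1$ with $\psi_i(\mathcal{Z}(\widetilde{E_i}))\subseteq N_i$ (here the pullback construction is the same device used in the proof of Proposition \ref{C2}, and the point is that the center of the pullback maps into $N_i$). Therefore $\mathcal{Z}^*(G)\subseteq \psi_i(\mathcal{Z}(\widetilde{E_i}))\subseteq N_i$, and taking the intersection over $i$ yields $\mathcal{Z}^*(G)\subseteq \cap_{i\in I}N_i$. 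Combining the two inclusions gives the asserted equality, and then Proposition \ref{C2} (applied to $\{N_i\}_{i\in I}$, each of whose quotients is Lie capable by hypothesis) shows $G/\cap_{i\in I}N_i = G/\mathcal{Z}^*(G)$ is Lie capable.

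The step I expect to be the main obstacle is the reverse inclusion, specifically verifying that when $G/N\cong E/\mathcal{Z}(E)$ one can manufacture a \emph{genuine central extension of $G$ itself} (not merely of $G/N$) whose center maps into $N$. This requires the pullback $G\times_{G/N}E$ to be set up carefully: one must check it is a subalgebra, that the projection to $G$ is surjective with kernel contained in the center and Lie center (so that it really is a central extension in the sense defined in the preliminaries), and crucially that the image in $G$ of its center is contained in $N$ rather than something larger. Once this construction is in hand the rest is bookkeeping; the first displayed claim of the corollary is then essentially free, since $\mathcal{Z}^*(G)$ equals the intersection of the family $\{N_i\}$ and Proposition \ref{C2} closes the family under intersection.
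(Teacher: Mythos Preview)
Your proposal is correct and follows essentially the same approach as the paper: both use that each $\phi(\mathcal{Z}(E))$ gives a Lie capable quotient, invoke Proposition~\ref{C2}, and use the pullback $G\times_{G/N_i}E_i$ to manufacture a central extension of $G$ whose center maps into $N_i$. The only difference is cosmetic ordering: the paper first applies Proposition~\ref{C2} to the family $\{\phi(\mathcal{Z}(E))\}$ to conclude $G/\mathcal{Z}^*(G)$ is Lie capable, then does the pullback to get $\mathcal{Z}^*(G)\subseteq\cap_i N_i$, and obtains the reverse inclusion by specializing $N_i=\mathcal{Z}^*(G)$; you instead establish both inclusions first and apply Proposition~\ref{C2} at the end.
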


\begin{proof}
	Suppose $$ 1 \longrightarrow \ker(\phi) \longrightarrow E \overset{\phi}\longrightarrow G\longrightarrow 1 $$ is a central extension of $G$, then $\frac{G}{\phi(\mathcal{Z}(E))}\cong \frac{E}{\mathcal{Z}(E)}$ is Lie capable. Hence by  Proposition \ref{C2} $\frac{G}{\mathcal{Z}^*(G)}$ is Lie capable.
	For the central extension of $\frac{G}{ N_i}$
	$$ 1 \longrightarrow \mathcal{Z}(E_i) \longrightarrow E_i \overset{\phi_i}\longrightarrow \frac{G}{ N_i}\longrightarrow 1 $$
	form the pull back $G \times_{\frac{G}{N_i}} E_i = \{(g,e_i)\in G \times E_i \ | \ \phi_i(e_i)=gN_i \}.$ Then 
	$$ 1 \longrightarrow \ker(\chi_i) \longrightarrow G \times_{\frac{G}{N_i}} E_i \overset{\chi_i}\longrightarrow G\longrightarrow 1 $$
	is a central extension of $G$, where $\chi_i$ being the first projection. Let $(g,e_i) \in \mathcal{Z}(G \times_{\frac{G}{N_i}} E_i).$ Since $\chi_i((g,e_i))N_i=gN_i=\phi_i(e_i)=N_i. $ By definition $\mathcal{Z}^*(G) \subseteq \chi_i(\mathcal{Z}(G \times_{\frac{G}{N_i}} E_i))\subseteq N_i$ for all $i\in I.$ Hence $\mathcal{Z}^*(G) \subseteq \cap_{i\in I} N_i.$
	Specialize to $N_i=\mathcal{Z}^*(G)$ for some $i.$
\end{proof}

One also sees that there is a central extension of $G$ such that $\chi_i(\mathcal{Z}(G \times_{\frac{G}{N_i}} E_i))={Z}^*(G).$

\begin{corollary}\label{C4}
     $G$ is Lie capable  if and only if  $\mathcal{Z}^*(G)=1.$
\end{corollary}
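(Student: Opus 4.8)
The plan is to deduce Corollary \ref{C4} directly from Corollary \ref{C3}, which already does the heavy lifting. The statement to prove is that $G$ is Lie capable if and only if $\mathcal{Z}^*(G) = 1$.

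First I would prove the backward direction. Suppose $\mathcal{Z}^*(G) = 1$. By Corollary \ref{C3}, $\frac{G}{\mathcal{Z}^*(G)}$ is Lie capable; but $\frac{G}{\mathcal{Z}^*(G)} = \frac{G}{1} \cong G$, so $G$ itself is Lie capable. This direction is essentially immediate.

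For the forward direction, suppose $G$ is Lie capable, so there is a multiplicative Lie algebra $E$ with $G \cong \frac{E}{\mathcal{Z}(E)}$. Then the projection $\phi \colon E \to G$ fits into a central extension $1 \longrightarrow \mathcal{Z}(E) \longrightarrow E \overset{\phi}\longrightarrow G \longrightarrow 1$, and here $\ker(\phi) = \mathcal{Z}(E)$, so $\phi(\mathcal{Z}(E)) = 1$. Since $\mathcal{Z}^*(G)$ is defined as the intersection over all central extensions of the images $\phi(\mathcal{Z}(E))$, and this particular central extension contributes the term $\phi(\mathcal{Z}(E)) = 1$, we get $\mathcal{Z}^*(G) \subseteq 1$, hence $\mathcal{Z}^*(G) = 1$. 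Alternatively, one may phrase this via Corollary \ref{C3}: taking $N = 1$ in the family $\{N_i\}$ (since $\frac{G}{1} \cong G$ is Lie capable), the identity ideal belongs to the family, so $\mathcal{Z}^*(G) = \cap_i N_i \subseteq 1$.

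I do not anticipate a genuine obstacle here, since the substantive work — that $\mathcal{Z}^*(G)$ is well-behaved under intersection and is the smallest ideal with Lie capable quotient — has been established in Proposition \ref{C2} and Corollary \ref{C3}. The only point requiring a moment's care is making sure the central extension coming from a Lie capable witness $E$ genuinely has $\phi(\mathcal{Z}(E)) = 1$, i.e. that $\ker(\phi) = \mathcal{Z}(E)$ exactly; this is immediate from the isomorphism $G \cong E/\mathcal{Z}(E)$ identifying $\phi$ with the natural quotient map. So the proof is a two-line argument in each direction.
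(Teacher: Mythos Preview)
Your proof is correct and follows essentially the same approach as the paper: the forward direction uses the central extension $1 \to \mathcal{Z}(E) \to E \overset{\phi}\to G \to 1$ coming from the Lie capable witness to force $\mathcal{Z}^*(G) \subseteq \phi(\mathcal{Z}(E)) = 1$, and the backward direction is immediate from Corollary~\ref{C3}. The paper's proof is terser (it only writes out the forward direction explicitly), but the argument is the same.
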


\begin{proof}
	Since $$ 1 \longrightarrow \mathcal{Z}(E) \longrightarrow E \overset{\phi}\longrightarrow G\longrightarrow 1 $$ is a central extension of $G$ and $\mathcal{Z}^*(G) \subseteq \phi(\mathcal{Z}(E))=1.$ Hence the result follows.
\end{proof}
In order to provide an alternative characterization of $\mathcal{Z}^*(G)$ using a free presentation of $G$, we require the following proposition.
\begin{proposition}\label{C5}
	For every free presentation $$1\longrightarrow R \longrightarrow F \overset{\pi}\longrightarrow G\longrightarrow 1$$ and every
	central extension $$1\longrightarrow \ker(\phi) \longrightarrow K \overset{\phi}\longrightarrow G\longrightarrow 1$$ of a multiplicative Lie algebra $G$, we have $ \overline\pi (\mathcal{Z}(\frac{F}{^M[R,F]})) \subseteq \phi(\mathcal{Z}(K)). $
	
\end{proposition}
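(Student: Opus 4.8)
The plan is to use freeness of $F$ to lift $\pi$ to a homomorphism into $K$, and then push a central element of $F/{}^M[R,F]$ into $\mathcal{Z}(K)$. First I would record that $\overline\pi$ makes sense: since $R$ is an ideal of $F$, we have $F\star R\subseteq R$ and $[F,R]\subseteq R$, hence $^M[R,F]=(F\star R)[F,R]\subseteq R=\ker\pi$, so $\pi$ induces a multiplicative Lie algebra homomorphism $\overline\pi\colon F/{}^M[R,F]\to G$. Because $F$ is free and $\phi\colon K\to G$ is surjective, $\pi$ lifts to a multiplicative Lie algebra homomorphism $\psi\colon F\to K$ with $\phi\circ\psi=\pi$.

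Next I would check that $\psi$ annihilates $^M[R,F]$. From $\phi\psi=\pi$ we get $\psi(R)\subseteq\ker\phi$, and since the extension is central, $\ker\phi\subseteq Z(K)\cap LZ(K)=\mathcal{Z}(K)$. Hence $\psi(F\star R)=\psi(F)\star\psi(R)\subseteq K\star\ker\phi=\{1\}$ (using $\ker\phi\subseteq LZ(K)$ together with the antisymmetry identity $(x\star y)(y\star x)=1$), and $\psi([F,R])=[\psi(F),\psi(R)]\subseteq[K,\ker\phi]=\{1\}$ (using $\ker\phi\subseteq Z(K)$). Therefore $\psi({}^M[R,F])=\{1\}$, so $\psi$ factors through a homomorphism $\overline\psi\colon F/{}^M[R,F]\to K$ with $\phi\circ\overline\psi=\overline\pi$.

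Now fix $z\in\mathcal{Z}(F/{}^M[R,F])$. I claim $\overline\psi(z)\in\mathcal{Z}(K)$, which at once gives $\overline\pi(z)=\phi(\overline\psi(z))\in\phi(\mathcal{Z}(K))$, as desired. Since $\phi$ is onto and $\ker\phi\subseteq\mathcal{Z}(K)$, every $k\in K$ can be written $k=\psi(f)\,c$ with $f\in F$ and $c\in\ker\phi$. As $z$ lies in $Z(F/{}^M[R,F])\cap LZ(F/{}^M[R,F])$ and $\overline\psi$ is a homomorphism, $\overline\psi(z)\star\psi(f)=1$ and $[\overline\psi(z),\psi(f)]=1$ for all $f$. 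Expanding $\overline\psi(z)\star k$ with identity~(2) of the definition of a multiplicative Lie algebra, and $[\overline\psi(z),k]$ with the commutator identity $[a,bc]=[a,b]\,{}^{b}[a,c]$, and using $c\in Z(K)\cap LZ(K)$ to kill the remaining factors, one gets $\overline\psi(z)\star k=1$ and $[\overline\psi(z),k]=1$ for every $k\in K$. Thus $\overline\psi(z)\in Z(K)\cap LZ(K)=\mathcal{Z}(K)$, completing the argument.

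The only genuine subtlety is that the lift $\psi$ need not be surjective, so one cannot conclude $\overline\psi(z)\in\mathcal{Z}(K)$ directly from $z\in\mathcal{Z}(F/{}^M[R,F])$; the remedy is the decomposition $K=\psi(F)\,\ker\phi$ with $\ker\phi$ central and Lie-central, after which the bilinear-type identities for $\star$ and for the group commutator finish the job. Beyond that, the only external input is the existence of the lift $\psi$, which is exactly where freeness of $F$ is used.
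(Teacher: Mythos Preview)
Your proof is correct and follows essentially the same approach as the paper: both construct a homomorphism $\overline\psi\colon F/{}^M[R,F]\to K$ over $G$ (the paper cites \cite{MS} for this, while you build it directly from freeness of $F$ and then factor through ${}^M[R,F]$), then use the decomposition $K=\overline\psi(F/{}^M[R,F])\cdot\ker\phi$ together with centrality of $\ker\phi$ to push $\mathcal{Z}(F/{}^M[R,F])$ into $\mathcal{Z}(K)$. Your write-up is in fact more self-contained, since you spell out why $\psi({}^M[R,F])=1$ and how the bilinearity identities finish the centrality check, whereas the paper leaves these as easy verifications.
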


\begin{proof}
	By \cite{MS}, there exists a  multiplicative Lie algebra homomorphism $$\tilde\psi :\frac{F}{^M[R,F]}\to K $$ such that $\phi\circ \tilde{\psi}=\overline\pi$ with $\tilde\psi(\frac{R}{^M[R,F]})\subseteq \ker(\phi).$ We can easily verify that $$K = \ker(\phi)\im(\tilde\psi).$$
	 Since $$ \tilde\psi(\mathcal{Z}(\frac{F}{^M[R,F]}))\star  \tilde\psi (\frac{F}{^M[R,F]}) = \tilde\psi(\mathcal{Z}(\frac{F}{^M[R,F]})\star  (\frac{F}{^M[R,F]})) = 1 $$ and $$ [\tilde\psi(\mathcal{Z}(\frac{F}{^M[R,F]})),  \tilde\psi (\frac{F}{^M[R,F]})] =  \tilde\psi[(\mathcal{Z}(\frac{F}{^M[R,F]})), (\frac{F}{^M[R,F]})] = 1. $$ 
	 Also, $$ \tilde\psi (\mathcal{Z}(\frac{F}{^M[R,F]})) \star \ker(\phi) = 1 $$ and  $$ [\tilde\psi (\mathcal{Z}(\frac{F}{^M[R,F]})), \ker(\phi)] = 1. $$   This implies  $ \tilde\psi (\mathcal{Z}(\frac{F}{^M[R,F]})) \subseteq \mathcal{Z}(K). $ Hence, we have  $\phi (\tilde\psi (\mathcal{Z}(\frac{F}{^M[R,F]})) \subseteq \phi(\mathcal{Z}(K)) $ which completes the proof.
\end{proof}

\begin{corollary}\label{C6}
		For any free presentation $$1\longrightarrow R \longrightarrow F \overset{\pi}\longrightarrow G\longrightarrow 1$$ of $G$, 
		$\mathcal{Z}^*(G) = \overline\pi \bigg(\mathcal{Z}\bigg(\frac{F}{^M[R,F]}\bigg)\bigg). $
\end{corollary}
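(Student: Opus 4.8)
\textbf{Proof proposal for Corollary \ref{C6}.}

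The plan is to establish the two inclusions separately, using Corollary \ref{C3} to reduce everything to a comparison of the ideal $\mathcal{Z}^*(G)$ with the image $\overline\pi\big(\mathcal{Z}\big(\tfrac{F}{^M[R,F]}\big)\big)$. First, note that a free presentation $1\to R\to F\overset{\pi}\to G\to 1$ gives rise, after quotienting by $^M[R,F]$, to the central extension
$$1\longrightarrow \frac{R}{^M[R,F]}\longrightarrow \frac{F}{^M[R,F]}\overset{\overline\pi}\longrightarrow G\longrightarrow 1;$$
one checks this is central because $^M[R,F]=(R\star F)[R,F]$ forces $\tfrac{R}{^M[R,F]}\subseteq Z\big(\tfrac{F}{^M[R,F]}\big)\cap LZ\big(\tfrac{F}{^M[R,F]}\big)=\mathcal{Z}\big(\tfrac{F}{^M[R,F]}\big)$. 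Applying the definition of $\mathcal{Z}^*(G)$ to this particular central extension yields immediately the inclusion $\mathcal{Z}^*(G)\subseteq \overline\pi\big(\mathcal{Z}\big(\tfrac{F}{^M[R,F]}\big)\big)$.

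For the reverse inclusion, I would invoke Proposition \ref{C5}: for \emph{every} central extension $1\to\ker(\phi)\to K\overset{\phi}\to G\to 1$ of $G$ we have $\overline\pi\big(\mathcal{Z}\big(\tfrac{F}{^M[R,F]}\big)\big)\subseteq\phi(\mathcal{Z}(K))$. Intersecting over all such central extensions $K$ and using the definition $\mathcal{Z}^*(G)=\cap\{\phi(\mathcal{Z}(E))\}$ gives $\overline\pi\big(\mathcal{Z}\big(\tfrac{F}{^M[R,F]}\big)\big)\subseteq\mathcal{Z}^*(G)$. Combining the two inclusions proves the equality. The argument is essentially a packaging of Proposition \ref{C5} together with the observation that $\tfrac{F}{^M[R,F]}$ is itself a central extension of $G$, so the bulk of the real work has already been done; the only things to verify here are the centrality of that distinguished extension and that the index set in the definition of $\mathcal{Z}^*(G)$ is rich enough to include it.

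I do not anticipate a serious obstacle, but the point requiring the most care is confirming that $\tfrac{F}{^M[R,F]}$ is a genuine multiplicative Lie algebra fitting into a \emph{central} extension of $G$ in the precise sense of the paper's definition (i.e. that $\tfrac{R}{^M[R,F]}$ sits inside $Z\cap LZ$, not merely inside one of them) — this is exactly where the choice of $^M[R,F]=(R\star F)[R,F]$ rather than just $[R,F]$ or $R\star F$ is used, since we need both $R\star F$ and $[R,F]$ killed to land in $LZ$ and in $Z$ respectively. Once that is in hand, the corollary follows formally from Proposition \ref{C5} and Corollary \ref{C3}.
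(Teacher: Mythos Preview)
Your argument is correct and follows essentially the same route as the paper: one inclusion comes from Proposition~\ref{C5} (intersecting over all central extensions), and the other from recognizing that $\frac{F}{^M[R,F]}$ is itself one of the central extensions appearing in the definition of $\mathcal{Z}^*(G)$. Your added justification that $\frac{R}{^M[R,F]}\subseteq Z\cap LZ$ is a welcome detail the paper leaves implicit; the mention of Corollary~\ref{C3} in your plan is harmless but unnecessary, as your actual proof (like the paper's) uses only Proposition~\ref{C5} and the definition of $\mathcal{Z}^*(G)$.
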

\begin{proof}
	It is clear by Proposition \ref{C5} that $ \overline\pi (\mathcal{Z}(\frac{F}{^M[R,F]})) \subseteq \mathcal{Z}^*(G).$ By definition of $\mathcal{Z}^*(G)$, we have  $\mathcal{Z}^*(G) \subseteq \overline\pi (\mathcal{Z}(\frac{F}{^M[R,F]})).$
\end{proof}
\begin{definition}\label{C7}
	A central extension $$1\longrightarrow N \longrightarrow G^* \overset{\phi}\longrightarrow G\longrightarrow 1$$ of multiplicative Lie algebras is called a stem extension if $N \subseteq {^M[G^*,G^*]}.$ If in addition $N \cong \tilde M(G)$ then the above extension is called a stem cover. In this case, $G^*$ is said to be a cover (or covering algebra) of $G.$
\end{definition}
The following corollaries are immediate consequences of Proposition \ref{P1} and \ref{P2}.
\begin{corollary}\label{P3}
	Let $$1\longrightarrow R \longrightarrow F \overset{\pi}\longrightarrow G\longrightarrow 1$$  be a free presentation of a perfect multiplicative Lie algebra $G.$ Then $\frac{^M[F,F]}{^M[R,F]}$ is a cover of $G.$
\end{corollary}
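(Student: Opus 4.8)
The plan is to realize $G^{*}:={^M[F,F]}/{^M[R,F]}$ as the multiplicative Lie commutator subalgebra of a suitable central extension of $G$, so that Proposition \ref{P1} applies and only the identification of the kernel with the Schur multiplier remains to be addressed.

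First I would pass to $K:=F/{^M[R,F]}$ and let $H$ denote the image of $R$ in $K$. Since $[R,F]\subseteq{^M[R,F]}$ and $R\star F\subseteq{^M[R,F]}$, we get $H\subseteq Z(K)\cap LZ(K)=\mathcal{Z}(K)$; and since $\pi(r\star f)=1=\pi([r,f])$ for $r\in R,\ f\in F$, the map $\pi$ factors through $K$, yielding $\overline{\pi}\colon K\to G$ with $\ker(\overline{\pi})=H=R/{^M[R,F]}$. Thus
$$1\longrightarrow H\longrightarrow K\overset{\overline{\pi}}{\longrightarrow} G\longrightarrow 1$$
is a central extension of the perfect multiplicative Lie algebra $G$. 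Using ${^M[R,F]}\subseteq{^M[F,F]}$ together with the $\star$-identities one checks that $(K\star K)[K,K]={^M[F,F]}/{^M[R,F]}=G^{*}$.

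Now Proposition \ref{P1} gives at once that $G^{*}={^M[K,K]}$ is perfect and that
$$1\longrightarrow H\cap G^{*}\longrightarrow G^{*}\longrightarrow G\longrightarrow 1$$
is again a central extension of $G$, where (using ${^M[R,F]}\subseteq R$) one has $H\cap G^{*}=\bigl(R\cap{^M[F,F]}\bigr)/{^M[R,F]}=:N$. Perfectness of $G^{*}$ means ${^M[G^{*},G^{*}]}=G^{*}$, so $N\subseteq{^M[G^{*},G^{*}]}$ and the displayed extension is a stem extension in the sense of Definition \ref{C7}. Finally, by the Hopf-type formula for the Schur multiplier of a multiplicative Lie algebra from \cite{MS} one has $N=\bigl(R\cap{^M[F,F]}\bigr)/{^M[R,F]}\cong\tilde M(G)$; hence the extension is in fact a stem cover and $G^{*}={^M[F,F]}/{^M[R,F]}$ is a cover of $G$. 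Proposition \ref{P2} additionally identifies this cover with the universal central extension of $G$, which in particular makes it unique in the perfect case.

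I expect the only genuinely delicate point to be the last one: the isomorphism $N\cong\tilde M(G)$ rests on the Hopf-type presentation $\tilde M(G)\cong(R\cap{^M[F,F]})/{^M[R,F]}$, which must either be taken as the definition of the Schur multiplier (as in \cite{MS}) or be justified by showing that this quotient is independent of the chosen free presentation. Everything else — normality of ${^M[R,F]}$, the inclusions ${^M[R,F]}\subseteq R$ and ${^M[R,F]}\subseteq{^M[F,F]}$, the centrality of $H$ in $K$, and the equality $(K\star K)[K,K]=G^{*}$ — is routine bookkeeping, after which the statement follows immediately from Propositions \ref{P1} and \ref{P2}.
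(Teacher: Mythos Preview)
Your argument is correct and follows exactly the route the paper intends: form the central extension $1\to R/{^M[R,F]}\to F/{^M[R,F]}\to G\to 1$, apply Proposition~\ref{P1} to obtain the perfect stem extension $1\to (R\cap{^M[F,F]})/{^M[R,F]}\to{^M[F,F]}/{^M[R,F]}\to G\to 1$, and identify the kernel with $\tilde M(G)$ via the Hopf-type formula. One small correction: in this paper the Hopf formula $\tilde M(G)\cong(R\cap{^M[F,F]})/{^M[R,F]}$ is attributed to \cite{RLS}, not \cite{MS} (see the proof of Proposition~\ref{C14}), so your reference should be adjusted accordingly; also, your closing invocation of Proposition~\ref{P2} is not needed for the statement itself, though it does explain why the paper lists \ref{P2} alongside \ref{P1} as the source of the corollary (the construction in the proof of \ref{P2} in \cite{RLS} produces precisely this extension as the universal one).
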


\begin{corollary}\label{P4}
	Suppose  $$1\longrightarrow N \longrightarrow G^* \overset{\phi}\longrightarrow G\longrightarrow 1$$ is a universal central extension of a perfect multiplicative Lie algebra $G$, then $N \cong \tilde M(G) $ and $G^*$ is a cover of $G.$
\end{corollary}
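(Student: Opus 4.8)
The plan is to reduce everything to the single claim that the cover of $G$ built from a free presentation in Corollary \ref{P3} is itself a \emph{universal} central extension; the statement then drops out of the uniqueness part of Proposition \ref{P2}. Fix a free presentation $1\longrightarrow R \longrightarrow F \overset{\pi}\longrightarrow G\longrightarrow 1$ and set $K=\frac{F}{{}^M[R,F]}$ and $C={}^M[K,K]=\frac{{}^M[F,F]}{{}^M[R,F]}$. Applying Proposition \ref{P1} to the central extension $1\to \frac{R}{{}^M[R,F]}\to K\to G\to 1$ shows that $C$ is perfect and that $1\to \ker\psi\to C\overset{\psi}\longrightarrow G\to 1$ is a central extension, where $\psi$ is the restriction of the map $K\to G$ induced by $\pi$; this is the cover of Corollary \ref{P3}, so $\ker\psi=\frac{{}^M[F,F]\cap R}{{}^M[R,F]}\cong\tilde M(G)$. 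Granting that $\psi\colon C\to G$ is a universal central extension, Proposition \ref{P2} supplies an isomorphism $\theta\colon G^*\to C$ with $\psi\circ\theta=\phi$; it carries $N=\ker\phi$ onto $\ker\psi\cong\tilde M(G)$, and since $G^*\cong C$ is perfect we get ${}^M[G^*,G^*]=G^*\supseteq N$, so $1\to N\to G^*\to G\to 1$ is a stem cover in the sense of Definition \ref{C7}, i.e.\ $G^*$ is a cover of $G$.

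To prove the claim I would check the two halves of the universal property of $\psi$. For \textbf{existence}, let $1\to B\to D\overset{\rho}\longrightarrow G\to 1$ be an arbitrary central extension. The lifting result of \cite{MS} used in the proof of Proposition \ref{C5} gives a homomorphism $\frac{F}{{}^M[R,F]}\to D$ lying over $G$; restricting it to $C={}^M[K,K]$ (which maps into ${}^M[D,D]\subseteq D$) produces a homomorphism $C\to D$ over $G$. For \textbf{uniqueness}, suppose $f,g\colon C\to D$ are two homomorphisms over $G$. Then for every $x\in C$ one has $f(x)g(x)^{-1}\in\ker\rho=B\subseteq Z(D)\cap LZ(D)$, and $x\mapsto f(x)g(x)^{-1}$ is a group homomorphism $C\to B$. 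Using the elementary identities for $\star$ — that an element of $LZ(D)$ annihilates everything under $\star$, and that a central element is invisible to both $\star$ and the group commutator — one checks $f(x\star y)=f(x)\star f(y)=g(x)\star g(y)=g(x\star y)$ and $f([x,y])=[f(x),f(y)]=[g(x),g(y)]=g([x,y])$ for all $x,y\in C$. Thus $x\mapsto f(x)g(x)^{-1}$ vanishes on every element of the form $x\star y$ and $[x,y]$; as $C$ is perfect these generate $C$, so $f=g$.

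I expect the uniqueness half of the universal property to be the only delicate point: it rests on a careful use of the multiplicative Lie algebra axioms to see that the error term $f(x)g(x)^{-1}$, which a priori lives in the central and Lie-central subalgebra $B$, is forced to die on the generators $x\star y$, $[x,y]$ of the perfect algebra $C$. Everything else is formal: the reduction to the claim, the existence of the lift (borrowed from \cite{MS}), the perfectness and kernel of $C$ (from Proposition \ref{P1} and Corollary \ref{P3}), the isomorphism $G^*\cong C$ (from Proposition \ref{P2}), and the final translation into the language of stem covers via Definition \ref{C7}.
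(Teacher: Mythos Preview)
Your argument is correct and follows the same route the paper has in mind: identify the universal central extension with the cover $C=\frac{{}^M[F,F]}{{}^M[R,F]}$ of Corollary~\ref{P3} (whose kernel is $\tilde M(G)$), then invoke the uniqueness in Proposition~\ref{P2} to transport the conclusion to $G^*$. The only difference is cosmetic: the paper treats the universality of $C$ as already known from \cite{RLS} (see the first line of the proof of Theorem~\ref{P6}), whereas you supply a self-contained verification of existence and uniqueness of lifts; your uniqueness computation, using that the error $f(x)g(x)^{-1}$ lies in $Z(D)\cap LZ(D)$ and hence is killed on the generators $x\star y$, $[x,y]$ of the perfect algebra $C$, is exactly the standard argument and is sound.
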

\begin{lemma}\label{C8}
	Let $A_1 , A_2 $ and $A_1 \times A_2$ be abelian groups with trivial multiplicative Lie algebras and $G$ be any  multiplicative Lie algebra. Then $H^2_{ML}(G,A_1 \times A_2) \cong H^2_{ML}(G,A_1) \times H^2_{ML}(G,A_2).  $
\end{lemma}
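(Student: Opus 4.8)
The plan is to prove this exactly as one proves the analogous fact for group cohomology, exploiting the functoriality of $H^2_{ML}(G,-)$ in the coefficient module together with the universal property of the product. First I would set up the obvious maps: the two projections $p_1 \colon A_1 \times A_2 \to A_1$ and $p_2 \colon A_1 \times A_2 \to A_2$ are homomorphisms of abelian groups carrying trivial multiplicative Lie algebra structure, hence (after checking the bracket condition is vacuous) are multiplicative Lie algebra homomorphisms; these induce homomorphisms $p_{1*}, p_{2*}$ on $H^2_{ML}(G,-)$. Combining them gives a homomorphism
\[
\Phi \colon H^2_{ML}(G, A_1 \times A_2) \longrightarrow H^2_{ML}(G,A_1) \times H^2_{ML}(G,A_2), \qquad \Phi(\xi) = (p_{1*}\xi,\ p_{2*}\xi).
\]
In the other direction I would use the inclusions $\iota_1 \colon A_1 \hookrightarrow A_1 \times A_2$ and $\iota_2 \colon A_2 \hookrightarrow A_1 \times A_2$ to define $\Psi(\eta_1,\eta_2) = \iota_{1*}\eta_1 \cdot \iota_{2*}\eta_2$, using the abelian group structure on $H^2_{ML}(G, A_1\times A_2)$ (Baer-type sum of extension classes, or sum of $2$-cocycles depending on which model of $H^2_{ML}$ the paper uses).

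The key steps, in order, are: (1) verify that $p_i$ and $\iota_i$ are genuine multiplicative Lie algebra homomorphisms between the trivial-structure abelian groups, so that the induced maps on $H^2_{ML}$ are defined and are group homomorphisms; (2) check $\Phi$ and $\Psi$ are mutually inverse. For $\Phi \circ \Psi = \mathrm{id}$ one uses $p_i \circ \iota_i = \mathrm{id}_{A_i}$ and $p_i \circ \iota_j = 0$ for $i \neq j$, together with functoriality $({-})_* $ being a group homomorphism, so that $p_{i*}(\iota_{1*}\eta_1 \cdot \iota_{2*}\eta_2) = p_{i*}\iota_{1*}\eta_1 \cdot p_{i*}\iota_{2*}\eta_2 = \eta_i$. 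For $\Psi \circ \Phi = \mathrm{id}$ one uses the identity $\iota_1 p_1 \cdot \iota_2 p_2 = \mathrm{id}_{A_1 \times A_2}$ as a map of abelian groups with trivial multiplicative Lie structure, and then that pushing forward along a sum of homomorphisms into an abelian coefficient module adds the corresponding pushforwards: $\iota_{1*}p_{1*}\xi \cdot \iota_{2*}p_{2*}\xi = (\iota_1 p_1 \cdot \iota_2 p_2)_*\xi = \mathrm{id}_*\xi = \xi$. This last compatibility — that $(f \cdot g)_* = f_* \cdot g_*$ when $f,g$ map into an \emph{abelian} coefficient group with trivial $\star$ — is the only place a genuine computation is needed, and it is where I would be most careful.

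I expect the main obstacle to be purely bookkeeping rather than conceptual: namely, pinning down precisely which cocycle/extension model of $H^2_{ML}(G,A)$ is in force (the paper cites \cite{MS}), and confirming in that model that (a) the coefficient functor $A \mapsto H^2_{ML}(G,A)$ is additive on direct sums of \emph{trivial} multiplicative Lie algebras — which requires knowing that the extra $\star$-cocycle data also splits coordinatewise across $A_1 \times A_2$, not just the underlying group $2$-cocycle — and (b) the addition on $H^2_{ML}$ is compatible with coordinatewise addition of such cocycles. Once the relevant definition of $H^2_{ML}$ is unwound, the argument is the standard one and requires no further ideas; if a fully explicit cocycle description is preferred, one simply writes a multiplicative Lie $2$-cocycle with values in $A_1\times A_2$ as a pair of $2$-cocycles with values in $A_1$ and $A_2$, checks the cocycle identities decouple, and checks the same for coboundaries, yielding the isomorphism directly.
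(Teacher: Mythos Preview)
Your proposal is correct. The paper, however, takes precisely the ``fully explicit cocycle'' route you mention only at the end: it writes a multiplicative Lie $2$-cocycle as a pair $(f,h)$, defines the map $\psi\colon Z^2_{ML}(G,A_1\times A_2)\to Z^2_{ML}(G,A_1)\times Z^2_{ML}(G,A_2)$ by $\psi((f,h))=((\pi_1\circ f,\pi_1\circ h),(\pi_2\circ f,\pi_2\circ h))$, and asserts this induces the required isomorphism on cohomology. Your main functorial argument via $p_{i*}$, $\iota_{i*}$ and the Baer sum is a perfectly valid and more conceptual alternative; its advantage is that it makes the result an instance of additivity of the coefficient functor, while the paper's approach avoids having to verify the compatibility $(f\cdot g)_* = f_*\cdot g_*$ by working directly at the cocycle level where the coordinatewise decoupling is immediate.
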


\begin{proof}
	The  map $\psi:Z^2_{ML}(G,A_1 \times A_2) \to Z^2_{ML}(G,A_1) \times Z^2_{ML}(G,A_2)$ by $\psi((f,h))=((\pi_1\circ f,\pi_1\circ h),(\pi_2\circ f,\pi_2\circ h))$, where $\pi_i:A_1\times A_2\to A_i$, $i=1,2$ gives the required isomorphism.
\end{proof}

The following theorem ensures the existence of a cover of finite multiplicative Lie algebra.

\begin{theorem}\label{C9}
	Any finite multiplicative Lie algebra $G$ has at least one cover $G^*.$
\end{theorem}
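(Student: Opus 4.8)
The plan is to imitate the classical construction of a covering group: take a free presentation of $G$ and, inside a suitable central quotient, split off a complement of the Schur multiplier. Since $G$ is finite it is finitely generated, so fix a free presentation
$$1\longrightarrow R\longrightarrow F\overset{\pi}\longrightarrow G\longrightarrow 1$$
with $F$ a free multiplicative Lie algebra of finite rank $n$. First I would record two facts about it. The first is the Hopf-type description $\tilde M(G)\cong \frac{R\cap {^M[F,F]}}{^M[R,F]}$ (from \cite{MS}; it is implicit in Corollary \ref{P3}), together with the observation that $R/{^M[R,F]}$ lies in $\mathcal{Z}(F/{^M[R,F]})$, since $[F,R]\subseteq {^M[R,F]}$ puts it in the group center and $F\star R\subseteq {^M[R,F]}$ puts it in the Lie center; thus $1\to R/{^M[R,F]}\to F/{^M[R,F]}\to G\to 1$ is a central extension. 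The second fact is that $F/{^M[F,F]}$ is free abelian of rank $n$: it is an abelian group with trivial $\star$, generated by the images of the $n$ free generators; the universal property of $F$ yields a multiplicative Lie algebra morphism $F\to\Z^n$ onto the free abelian group on those generators (with trivial $\star$), which factors through $F/{^M[F,F]}$, and the group homomorphism $\Z^n\to F/{^M[F,F]}$ sending the basis to the generators is a section of it; comparing the two maps gives $F/{^M[F,F]}\cong\Z^n$.

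Next I would split the short exact sequence of abelian groups
$$1\longrightarrow \frac{R\cap {^M[F,F]}}{^M[R,F]}\longrightarrow \frac{R}{^M[R,F]}\longrightarrow \frac{R}{R\cap {^M[F,F]}}\longrightarrow 1.$$
Its last term embeds into $F/{^M[F,F]}\cong\Z^n$, hence is free abelian of finite rank, so the sequence splits; choosing a complement I write $R/{^M[R,F]}=\frac{R\cap {^M[F,F]}}{^M[R,F]}\times \frac{S}{^M[R,F]}$ for a subgroup $S$ of $F$ with ${^M[R,F]}\subseteq S\subseteq R$. In fact $S$ is an ideal of $F$, because $S/{^M[R,F]}$ is a central subgroup of $F/{^M[R,F]}$, hence normal and contained in its Lie center. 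Now set $G^*:=F/S$, with $\phi:G^*\to G$ the map induced by $\pi$ (well defined since $S\subseteq R$). From the direct decomposition, $R=(R\cap {^M[F,F]})\,S$ and $(R\cap {^M[F,F]})\cap S={^M[R,F]}$, so $\ker\phi=R/S\cong \frac{R\cap {^M[F,F]}}{^M[R,F]}\cong\tilde M(G)$.

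It then remains to check that $1\to R/S\to G^*\to G\to 1$ is a stem cover. It is a central extension: $[F,R]$ and $F\star R$ both lie in ${^M[R,F]}\subseteq S$, so $R/S\subseteq \mathcal{Z}(G^*)$. It is a stem extension: $R/S=(R\cap {^M[F,F]})S/S\subseteq {^M[F,F]}S/S={^M[G^*,G^*]}$. Since moreover $R/S\cong\tilde M(G)$, the extension is a stem cover in the sense of Definition \ref{C7}, so $G^*$ is a cover of $G$, which is what we want.

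The step I expect to be the main obstacle is making the Hopf-type identification $\tilde M(G)\cong (R\cap {^M[F,F]})/{^M[R,F]}$ and the freeness of $F/{^M[F,F]}$ fully rigorous in the multiplicative Lie setting; once these are in place the rest is a diagram chase. A point that keeps the argument clean is that every subgroup occurring in the decomposition lies inside a center, so a splitting merely as abelian groups automatically upgrades to a splitting by ideals, and no compatibility with $\star$ needs separate verification. An alternative, more cohomological route would instead realize a class in $H^2_{ML}(G,\tilde M(G))$ mapping to $\mathrm{id}_{\tilde M(G)}$ under a universal-coefficient map, using Lemma \ref{C8} to reduce to cyclic coefficient groups; but the presentation-based construction above seems the most direct.
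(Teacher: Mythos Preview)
Your argument is correct, but it follows a genuinely different route from the paper's. The paper takes precisely the cohomological path you sketch at the end as an ``alternative'': it writes the (finite) abelian group $\tilde M(G)$ as a direct product of cyclic groups $\langle(f_i,h_i)\rangle$ of orders $d_i$, sets $A=A_1\times\cdots\times A_m$ with $A_i$ the $d_i$-th roots of unity in $\C^*$, uses Lemma~\ref{C8} to assemble a class $(\gamma,\xi)\in H^2_{ML}(G,A)$ hitting the generators $(f_i,h_i)$, takes the associated central extension $1\to A\to G^*\to G\to 1$, and then checks $A\subseteq{^M[G^*,G^*]}$ via the transgression map $\delta:\mathrm{Hom}(A,\C^*)\to\tilde M(G)$ from \cite{RLS}. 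You instead take the classical free-presentation route: using that $R/(R\cap{^M[F,F]})$ embeds in $F/{^M[F,F]}\cong\Z^n$ and is therefore free abelian, you split off a complement $S/{^M[R,F]}$ of the Hopf quotient inside $R/{^M[R,F]}$ and set $G^*=F/S$. Your approach buys a bit more: it does not really need $G$ finite (only that subgroups of free abelian groups are free abelian), and it produces the cover explicitly as a quotient of the free object, which meshes well with Theorem~\ref{C12} and Proposition~\ref{C14}. The paper's approach, by contrast, ties the cover directly to a concrete $2$-cocycle and to the transgression machinery of \cite{RLS}, which is what one wants for calculations and for the non-uniqueness example that follows. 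Both routes rest on the Hopf formula $\tilde M(G)=(R\cap{^M[F,F]})/{^M[R,F]}$, which the paper attributes to \cite{RLS} (see the proof of Proposition~\ref{C14}).
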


\begin{proof}
	We write $\tilde M(G) = {<(f_1,h_1)>\times <(f_2,h_2)> \times \cdots \times <(f_m,h_m)>} $ for suitable ${<(f_1,h_1)> ,\cdots , <(f_m,h_m)>} \in \tilde M(G).$ If $d_i$ denote the order of $<(f_i,h_i)>$ then $<(f_i,h_i)>$ contains a multiplicative Lie 2-cocycle $(\alpha_i,\beta_i)$ of order $d_i.$ Let $A_i$ be the group of all $d_i$th roots of $1$ in $\C^*$ and let $A = A_1 \times A_2 \times \cdots \times A_m.$ Then the choice of $d_i$ guarantees that $A\cong \tilde M(G). $ Let $\bar{\psi}: H^2_{ML}(G,A_i) \to \tilde M(G) $ be the homomorphism induced by the natural injection $\psi:A_i \to \C^*.$ Since $(\alpha_i,\beta_i)$ is an $A_i$-valued multiplicative Lie 2-cocycle, there exists $(\gamma_i,\xi_i)\in H^2_{ML}(G,A_i) $ such that $\bar{\psi}((\gamma_i,\xi_i))=(f_i,h_i).$ Let $(\gamma,\xi)$ be the image of $((\gamma_1,\xi_1), \cdots , (\gamma_m,\xi_m))$ under the isomorphism $\prod_{i=1}^{m}H^2_{ML}(G,A_i)\to H^2_{ML}(G,A)$ in Lemma \ref{C8}. Suppose  $$1\longrightarrow A \longrightarrow G^* \longrightarrow G\longrightarrow 1$$ is a central extension associated with $(\gamma,\xi).$ To prove our theorem it suffices to verify that $A\subseteq {^M[G^*,G^*]}.$  Consider the homomorphism   $\delta:Hom(A,\C^*)\to \tilde M(G) $ associated with $(\gamma,\xi)$ (See \cite{RLS}). Then $\delta$ is surjective and $\tilde M(G)\cong\im(\delta)\cong {^M[G^*,G^*]}\cap A $ by \cite{RLS} . But $A\cong \tilde M(G)$, so $A\subseteq {^M[G^*,G^*]}.$
\end{proof}

\begin{remark}\label{C10}
Cover of a  multiplicative Lie algebra $G$ need not be unique.
\end{remark}

\begin{example}
	Consider the Klein four-group $V_4$ with trivial multiplicative Lie algebra structure. We know  that $\tilde M(V_4) = V_4 $ from \cite{ADSS}. Let $(\gamma,\xi)$ be a multiplicative Lie 2-cocycle of a trivial multiplicative Lie algebra $V_4$  with coefficient in
	an abelian group $V_4$ with trivial Lie product. By construction $$1\longrightarrow \tilde M(V_4) \longrightarrow G^* \longrightarrow V_4 \longrightarrow 1$$ is a central extension associated with $(\gamma,\xi).$  Then by \cite{RLS}, we have $G^* = \tilde M(V_4) \times V_4 = V_4 \times V_4 $ is a multiplicative Lie algebra with respect to the  operations  $$(a,x)\cdot(b,y) = (ab\gamma(x,y),xy)$$ and $$(a,x)\star(b,y) = (\xi(x,y),x\star y) = (\xi(x,y), 1). $$ Take $\xi(x,y)=1$ for all $x,y \in V_4.$ Also, ${^M[G^*,G^*]} = V_4$ because $(a,x)\cdot(b,y)\cdot(a^{-1}\gamma(x,x^{-1})^{-1},x^{-1})\cdot(b^{-1}\gamma(y,y^{-1})^{-1},y^{-1}) = \gamma(x,x)\gamma(y,x)  $ and $\gamma(x,x) \neq \gamma(y,x).$ Hence $G^*$ is a cover   of $V_4$ with trivial multiplicative Lie algebra structure which depends on the choice of group 2-cocycle $\gamma:V_4 \times V_4 \to V_4.$
	
	Moreover, if we take $\xi(x,y) = x$, then $G^*$ is a cover   of $V_4$ with non-trivial multiplicative Lie algebra structure which also depends on the choice of group 2-cocycle $\gamma:V_4 \times V_4 \to V_4.$
\end{example}

\begin{remark}
	If $f: G \to H$ is an epimorphism of multiplicative Lie algebras, then $f(\mathcal{Z}(G))\subseteq \mathcal{Z}(H).$
\end{remark}

The subsequent theorem provides an alternative representation of $\mathcal{Z}^*(G)$ using covers. In particular, it states the existence of a central extension of $G$ where the image of its center corresponds to $\mathcal{Z}^*(G)$.
\begin{theorem}\label{C12}
	Let $G$ be a multiplicative Lie algebra such that its Schur multiplier is finite
 and let $$1\longrightarrow N \longrightarrow G^* \overset{\phi}\longrightarrow G\longrightarrow 1$$ be a stem cover of $G.$ Then $\mathcal{Z}^*(G) = \phi(\mathcal{Z}(G^*)).$
\end{theorem}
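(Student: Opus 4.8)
The plan is to prove the two inclusions $\mathcal{Z}^*(G) \subseteq \phi(\mathcal{Z}(G^*))$ and $\phi(\mathcal{Z}(G^*)) \subseteq \mathcal{Z}^*(G)$ separately, exploiting the characterization of $\mathcal{Z}^*(G)$ via a free presentation from Corollary \ref{C6} together with the relationship between a stem cover and a free presentation. First I would fix a free presentation $1 \to R \to F \overset{\pi}\to G \to 1$ of $G$ and recall from Corollary \ref{C6} that $\mathcal{Z}^*(G) = \overline\pi(\mathcal{Z}(F/{^M[R,F]}))$. Since the given extension $1 \to N \to G^* \overset{\phi}\to G \to 1$ is central, Proposition \ref{C5} immediately gives $\overline\pi(\mathcal{Z}(F/{^M[R,F]})) \subseteq \phi(\mathcal{Z}(G^*))$, i.e. $\mathcal{Z}^*(G) \subseteq \phi(\mathcal{Z}(G^*))$, so one inclusion is essentially free.

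For the reverse inclusion, the idea is to use that $G^*$, being a stem cover, is in a precise sense the ``largest'' stem extension, and to compare it against the stem extension $F/{^M[R,F]} \to G$ built from the free presentation. Concretely, from \cite{MS} (as invoked in the proof of Proposition \ref{C5}) there is a multiplicative Lie algebra homomorphism $\tilde\psi: F/{^M[R,F]} \to G^*$ with $\phi \circ \tilde\psi = \overline\pi$ and $\tilde\psi(R/{^M[R,F]}) \subseteq N$; moreover $G^* = N\,\im(\tilde\psi)$. Now I would argue that finiteness of $\tilde M(G) \cong N$ forces $\tilde\psi$ to be surjective: the restriction $R/{^M[R,F]} \to N$ has image the full $N$ (this is where the stem cover hypothesis $N \subseteq {^M[G^*,G^*]}$ and $N \cong \tilde M(G)$, combined with the standard identification $\tilde M(G) \cong ({^M[F,F]} \cap R)/{^M[R,F]}$, together with finiteness, get used — a surjective endomorphism of a finite group is an automorphism). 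Once $\tilde\psi$ is surjective, $\mathcal{Z}(G^*) = \tilde\psi(\mathcal{Z}(F/{^M[R,F]}))$ follows as for groups (central extensions let you push centers forward and, with surjectivity plus centrality of $N$, pull them back), whence $\phi(\mathcal{Z}(G^*)) = \phi\tilde\psi(\mathcal{Z}(F/{^M[R,F]})) = \overline\pi(\mathcal{Z}(F/{^M[R,F]})) = \mathcal{Z}^*(G)$.

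The main obstacle I anticipate is the claim that $\tilde\psi$ is surjective — equivalently that $\tilde\psi$ maps $R/{^M[R,F]}$ onto all of $N$, not merely into it. This is exactly the point where the hypotheses ``stem cover'' (so $N \cong \tilde M(G)$ sits inside ${^M[G^*,G^*]}$) and ``Schur multiplier finite'' must be combined: one shows $\tilde\psi$ restricts to a surjection on derived-type subalgebras ${^M[F,F]}/{^M[R,F]} \to {^M[G^*,G^*]}$ since $G^* = N\,\im(\tilde\psi)$ and $N$ is central, then intersects with the copies of the multiplier on each side and uses that a surjective self-map of the finite group $\tilde M(G)$ is bijective. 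A secondary technical point is verifying carefully that $\mathcal{Z}(G^*) \subseteq \tilde\psi(\mathcal{Z}(F/{^M[R,F]}))$: given $z \in \mathcal{Z}(G^*)$, write $z = \tilde\psi(w)$ and check that $w$ is central in $F/{^M[R,F]}$ using that $\ker\tilde\psi \subseteq N \subseteq \mathcal{Z}(F/{^M[R,F]})$ — here one must confirm $\ker\tilde\psi$ is indeed central, which again follows from $\ker\tilde\psi \subseteq R/{^M[R,F]} \subseteq \mathcal{Z}(F/{^M[R,F]})$, the latter being the defining property of the $^M$-bracket construction. Everything else is bookkeeping parallel to the classical group-theoretic argument.
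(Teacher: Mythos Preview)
Your strategy is close to the paper's but packaged differently. The paper does not build an abstract lifting $\tilde\psi$ and then argue surjectivity by a counting trick; instead it invokes \cite{MS} to realise the given stem cover \emph{inside} the free presentation: there is an ideal $T$ of $F$ with ${^M[R,F]}\subseteq T\subseteq R$, $G^*\cong F/T$, $N\cong R/T$, and $R/{^M[R,F]}=\tilde M(G)\cdot\bigl(T/{^M[R,F]}\bigr)$ with $T/{^M[R,F]}\cap\tilde M(G)=1$. Your $\tilde\psi$ then becomes the canonical quotient $F/{^M[R,F]}\to F/T$, surjective automatically, and the key fact $\ker\tilde\psi\cap{^M[F,F]}/{^M[R,F]}=1$ is built into the construction rather than extracted afterwards from finiteness. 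The paper then shows directly that if $S=\nu^{-1}(\mathcal{Z}(F/T))$ then $S/{^M[R,F]}=\mathcal{Z}(F/{^M[R,F]})$, which is your equality $\mathcal{Z}(G^*)=\tilde\psi(\mathcal{Z}(F/{^M[R,F]}))$ in disguise. Your variant, going through Proposition~\ref{C5} and then forcing $\tilde\psi$ to be onto via a surjective-self-map-of-a-finite-group argument, is a legitimate alternative and has the merit of making the hypothesis ``$\tilde M(G)$ finite'' visibly indispensable.

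There is, however, a genuine gap in your ``secondary technical point.'' The claim that $w$ is central because $\ker\tilde\psi\subseteq R/{^M[R,F]}\subseteq\mathcal{Z}(F/{^M[R,F]})$ is insufficient: for a surjection with merely central kernel, preimages of central elements need not be central. What is actually needed --- and what you have already earned from your counting argument, but do not invoke --- is that $\ker\tilde\psi$ meets ${^M[F,F]}/{^M[R,F]}$ trivially. Indeed, since $\tilde\psi|_{\tilde M(G)}$ is a surjection of finite groups of the same order, it is injective, so $\ker\tilde\psi\cap\tilde M(G)=1$; and since $\ker\tilde\psi\subseteq R/{^M[R,F]}$, this gives $\ker\tilde\psi\cap{^M[F,F]}/{^M[R,F]}=1$. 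Now for $z=\tilde\psi(w)\in\mathcal{Z}(G^*)$ and any $f$, the element $(w\star f)[w,f]$ lies both in $\ker\tilde\psi$ and in ${^M[F,F]}/{^M[R,F]}$, hence is trivial, so $w\in\mathcal{Z}(F/{^M[R,F]})$. With this correction your proof goes through; note that this is exactly the role played by the identity $T/{^M[R,F]}\cap{^M[F,F]}/{^M[R,F]}=1$ in the paper's argument.
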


\begin{proof}
Let $$1\longrightarrow R \longrightarrow F \overset{\pi}\longrightarrow G\longrightarrow 1$$ be a free presentation of multiplicative Lie algebra $G.$ By \cite{MS}, there exists an ideal $T$ of $F$
such that $G^*\cong \frac{F}{T} ,  N\cong \frac{R}{T}$, and $\frac{R}{^M[R,F]}\cong \tilde M(G)\frac{T}{^M[R,F]}. $ So we may regard $\phi$ as an epimorphism of multiplicative Lie algebra $\frac{F}{T}$ onto $G$  such that $\ker \phi \cong \frac{R}{T}.$ Let $\nu : F \to \frac{F}{T} $ be natural epimorphism of multiplicative Lie algebras.  Suppose $S = \nu^{-1}(\mathcal{Z}(\frac{F}{T}))$ , then $S$ is an ideal of $F$ such that  ${^M[R,F]} \subseteq S$. So $\mathcal{Z}(\frac{F}{^M[R,F]})\subseteq \frac{S}{^M[R,F]}.$ Also, $^M[\frac{S}{^M[R,F]},\frac{T}{^M[R,F]}] \subseteq \frac{T}{^M[R,F]}\cap \frac{{^M[F,F]}}{^M[R,F]} = 1$ and  $$1\longrightarrow  \frac{T}{^M[R,F]} \longrightarrow  \frac{F}{^M[R,F]} \longrightarrow \frac{F}{T}\longrightarrow 1$$ is exact. So $\frac{S}{^M[R,F]} = \mathcal{Z}(\frac{F}{^M[R,F]}).$ Thus we have
$\mathcal{Z}^*(G) = \overline\pi (\mathcal{Z}(\frac{F}{^M[R,F]}))  = \overline\pi (\frac{S}{^M[R,F]}) = \pi(S) = \phi(\mathcal{Z}(\frac{F}{T})) = \phi(\mathcal{Z}(G^*)). $
\end{proof}

\begin{theorem}\label{C13}
		Let $G$ be a multiplicative Lie algebra such that $\mathcal{Z}(G) \subseteq \mathcal{Z}^*(G)$ then for every stem extension $$1\longrightarrow N \longrightarrow K \overset{\phi}\longrightarrow G\longrightarrow 1$$ we have  $\phi(\mathcal{Z}(K))=\mathcal{Z}(G).$
\end{theorem}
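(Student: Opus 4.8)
The plan is to establish the two inclusions $\phi(\mathcal{Z}(K)) \subseteq \mathcal{Z}(G)$ and $\mathcal{Z}(G) \subseteq \phi(\mathcal{Z}(K))$ separately. The first is a general fact valid for any central extension, and the hypothesis $\mathcal{Z}(G) \subseteq \mathcal{Z}^*(G)$ will be needed only for the second.

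First I would verify $\phi(\mathcal{Z}(K)) \subseteq \mathcal{Z}(G)$. Since $\phi$ is an epimorphism of multiplicative Lie algebras, this is precisely the Remark preceding the theorem; alternatively it can be checked by hand: for $z \in \mathcal{Z}(K) = LZ(K)\cap Z(K)$ and an arbitrary $g \in G$, write $g = \phi(k)$ and compute $\phi(z)\star g = \phi(z\star k) = 1$ and $[\phi(z),g] = \phi([z,k]) = 1$, using surjectivity of $\phi$, so that $\phi(z) \in LZ(G)\cap Z(G) = \mathcal{Z}(G)$. This step uses only that the extension is central, which is part of the definition of a stem extension.

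Next I would observe that the given stem extension $1\longrightarrow N \longrightarrow K \overset{\phi}\longrightarrow G\longrightarrow 1$ is in particular a central extension of $G$ with $\ker\phi = N$. Hence, directly from the definition of $\mathcal{Z}^*(G)$ as the intersection $\bigcap\{\phi'(\mathcal{Z}(E)) \mid 1\longrightarrow \ker\phi' \longrightarrow E \overset{\phi'}\longrightarrow G\longrightarrow 1 \text{ is a central extension}\}$, one of whose terms is $\phi(\mathcal{Z}(K))$, we get $\mathcal{Z}^*(G) \subseteq \phi(\mathcal{Z}(K))$. Combining this with the hypothesis $\mathcal{Z}(G) \subseteq \mathcal{Z}^*(G)$ yields $\mathcal{Z}(G) \subseteq \phi(\mathcal{Z}(K))$.

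Putting the two inclusions together gives $\phi(\mathcal{Z}(K)) = \mathcal{Z}(G)$, as claimed. There is no substantial obstacle; the only point requiring a moment of care is the first inclusion, where one must carry both the Lie-center condition ($z\star k = 1$) and the ordinary group-center condition ($[z,k]=1$) through the surjection $\phi$. It is perhaps worth remarking in the writeup that the stem hypothesis $N \subseteq {^M[K,K]}$ plays no role here beyond guaranteeing that the extension is central.
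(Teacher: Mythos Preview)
Your proposal is correct and matches the paper's argument almost exactly: the paper simply writes the chain $\mathcal{Z}^*(G) \subseteq \phi(\mathcal{Z}(K)) \subseteq \mathcal{Z}(G)$, invoking the definition of $\mathcal{Z}^*(G)$ for the first inclusion and (implicitly) the preceding Remark for the second, then combines with the hypothesis. Your observation that the stem condition $N \subseteq {^M[K,K]}$ is not actually used beyond centrality is also accurate.
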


\begin{proof}
By definition of $\mathcal{Z}^*(G)$, we have  $\mathcal{Z}^*(G) \subseteq \phi(\mathcal{Z}(K))\subseteq \mathcal{Z}(G) $ for every stem extension $$1\longrightarrow H \longrightarrow K \overset{\phi}\longrightarrow G\longrightarrow 1.$$ Hence the proof follows. 	
\end{proof}

\begin{proposition}\label{C14}
	Let $G$ be a multiplicative Lie algebra with a free presentation $$1\longrightarrow R \longrightarrow F \overset{\pi}\longrightarrow G\longrightarrow 1.$$
	If $S$ is an ideal in $F$ with $\frac{S}{R} = N$, then the following sequence is exact:
	
	$$1\longrightarrow \frac{^M[S,F]\cap R}{^M[R,F]} \longrightarrow \tilde M(G) \overset{\sigma}\longrightarrow \tilde M(\frac{G}{N})\longrightarrow \frac{^M[G,G]\cap N}{^M[N,G]}\longrightarrow 1.$$
\end{proposition}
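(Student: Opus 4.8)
The plan is to mimic the classical Ganea-type exact sequence from group cohomology / Lie algebra theory, transported to the multiplicative Lie algebra setting, using the free presentation $1\to R\to F\xrightarrow{\pi} G\to 1$ together with the ideal $S$ satisfying $S/R=N$. So $\pi$ induces a free presentation $1\to S\to F\to G/N\to 1$ as well, and both Schur multipliers can be computed à la Hopf: by \cite{MS} (the Hopf-type formula already invoked in Corollary \ref{C6} and Theorem \ref{C12}) one has $\tilde M(G)\cong \dfrac{R\cap {}^M[F,F]}{{}^M[R,F]}$ and $\tilde M(G/N)\cong \dfrac{S\cap {}^M[F,F]}{{}^M[S,F]}$. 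First I would set up the obvious candidate maps: $\sigma:\tilde M(G)\to \tilde M(G/N)$ is induced by the inclusion ${}^M[R,F]\subseteq {}^M[S,F]$ on the representatives (well-defined since $R\subseteq S$ forces $R\cap {}^M[F,F]\subseteq S\cap{}^M[F,F]$), the map $\tilde M(G/N)\to \dfrac{{}^M[G,G]\cap N}{{}^M[N,G]}$ is induced by $x({}^M[S,F])\mapsto \pi(x)\,({}^M[N,G])$ for $x\in S\cap {}^M[F,F]$ — note $\pi(S\cap{}^M[F,F])\subseteq \pi(S)\cap {}^M[G,G]= N\cap {}^M[G,G]$ and $\pi({}^M[S,F])={}^M[N,G]$ since $\pi(S)=N$, $\pi(F)=G$ — and the leftmost inclusion $\dfrac{{}^M[S,F]\cap R}{{}^M[R,F]}\hookrightarrow \tilde M(G)$ comes from ${}^M[S,F]\cap R\subseteq {}^M[F,F]\cap R$ (legitimate because ${}^M[S,F]\subseteq {}^M[F,F]$), so this subgroup genuinely sits inside $\dfrac{R\cap{}^M[F,F]}{{}^M[R,F]}=\tilde M(G)$.

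Next I would verify exactness at each of the four spots, which is a diagram chase on cosets inside $F$. Exactness at $\tilde M(G/N)$ and at $\dfrac{{}^M[G,G]\cap N}{{}^M[N,G]}$: surjectivity of the last map follows because any element of ${}^M[G,G]\cap N$ lifts through $\pi$ to an element of ${}^M[F,F]$ that lies in $S$ (as its image is in $N=\pi(S)$ and $\ker\pi=R\subseteq S$), and the kernel of that map is exactly the image of $\sigma$ since $x({}^M[S,F])$ dies iff $\pi(x)\in {}^M[N,G]={}^M[N,G]$, i.e. iff $x\in R\cdot{}^M[S,F]$, which after intersecting with ${}^M[F,F]$ is the image of $R\cap{}^M[F,F]$. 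Exactness at $\tilde M(G)$: $\sigma$ sends $y({}^M[R,F])$ to $0$ iff $y\in {}^M[S,F]$, i.e. $y\in {}^M[S,F]\cap R\cap{}^M[F,F]={}^M[S,F]\cap R$ (the last equality because ${}^M[S,F]\subseteq{}^M[F,F]$ already), which is precisely the image of the leftmost term; injectivity of the leftmost inclusion is immediate from the definitions. Throughout, the fact that the relevant subsets are ideals (so the quotients make sense and the maps are homomorphisms of multiplicative Lie algebras / abelian groups) is guaranteed by the first Proposition of Section 3, that ${}^M[A,B]$ is an ideal, plus the standard normality manipulations; the identities from Proposition 2.3 (e.g. $[(x\star y),z]=([x,y]\star z)$ and the conjugation formula ${}^{(x\star y)}(u\star v)={}^{[x,y]}(u\star v)$) are what make ${}^M[S,F]$ behave well under $\pi$.

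The main obstacle I anticipate is not the diagram chase itself but justifying that the maps are well defined independently of the choices hidden in the Hopf-type description of $\tilde M(G)$ — concretely, checking that ${}^M[S,F]\cap {}^M[F,F]={}^M[S,F]$ and ${}^M[R,F]\cap{}^M[F,F]={}^M[R,F]$ (so no spurious intersection terms appear), and that $\pi$ carries ${}^M[S,F]$ onto ${}^M[N,G]$ exactly (both inclusions), which requires using that $F$ surjects onto $G$ and that the mixed product ${}^M[\cdot,\cdot]$ is generated by $\star$-products and commutators in a way compatible with the surjection. A secondary subtlety is confirming that $\sigma$ agrees with whatever intrinsic (cohomological) definition of the restriction/inflation map on $\tilde M$ is being used elsewhere in the paper, so that the statement is about the "right" map $\sigma$; I would handle this by taking the Hopf-formula description as the working definition, consistent with Corollaries \ref{C6} and \ref{P3} and Theorem \ref{C12}. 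Once these compatibilities are nailed down, the four exactness verifications are routine coset bookkeeping.
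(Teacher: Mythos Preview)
Your proposal is correct and follows essentially the same route as the paper: both identify $\tilde M(G)=\dfrac{{}^M[F,F]\cap R}{{}^M[R,F]}$ and $\tilde M(G/N)=\dfrac{{}^M[F,F]\cap S}{{}^M[S,F]}$ via the Hopf formula and then verify exactness by a coset chase. The only cosmetic difference is that the paper first rewrites the rightmost term inside $F$ as $\dfrac{({}^M[F,F]\cap S)\cdot R}{{}^M[S,F]\cdot R}$ (using Dedekind's modular law, since $R\subseteq S$), so that the entire sequence lives in $F$ and the verification becomes a pure subgroup-lattice check, whereas you phrase the last map through $\pi$; these are equivalent packagings of the same argument.
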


\begin{proof}
	By \cite{RLS}, we have $\tilde M(G) = \frac{^M[F,F]\cap R}{^M[R,F]} $ and $\tilde M(\frac{G}{N}) = \frac{^M[F,F]\cap S}{^M[S,F]}.$ Since $\frac{^M[G,G]\cap N}{^M[N,G]} =  \frac{(^M[F,F]\cdot R)\cap S}{^M[S,F]\cdot R} = \frac{(^M[F,F]\cap S)\cdot R}{^M[S,F]\cdot R}$, therefore one can easily check that the following sequence is exact:
	$$1\longrightarrow \frac{^M[S,F]\cap R}{^M[R,F]} \longrightarrow  \frac{^M[F,F]\cap R}{^M[R,F]}  \overset{\sigma}\longrightarrow \frac{^M[F,F]\cap S}{^M[S,F]}\longrightarrow \frac{(^M[F,F]\cap S)\cdot R}{^M[S,F]\cdot R}\longrightarrow 1,$$ which gives the result.
	
\end{proof}

\begin{corollary}\label{C15}
	Let $N$ be an ideal in a multiplicative Lie algebra $G$ and $\tilde M(G) = 1.$ Then $\tilde M(\frac{G}{N})\cong  \frac{^M[G,G]\cap N}{^M[N,G]}.$
\end{corollary}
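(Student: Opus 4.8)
The plan is to apply Proposition \ref{C14} directly with the hypothesis $\tilde M(G) = 1$ and extract the desired isomorphism from the resulting exact sequence. First I would recall the statement of Proposition \ref{C14}: for a free presentation $1\to R\to F\overset{\pi}\to G\to 1$ and an ideal $S$ of $F$ with $\frac{S}{R}=N$, the sequence
$$1\longrightarrow \frac{^M[S,F]\cap R}{^M[R,F]} \longrightarrow \tilde M(G) \overset{\sigma}\longrightarrow \tilde M(\tfrac{G}{N})\longrightarrow \frac{^M[G,G]\cap N}{^M[N,G]}\longrightarrow 1$$
is exact. Since $\tilde M(G)=1$, the first two nonzero terms collapse: the subobject $\frac{^M[S,F]\cap R}{^M[R,F]}$ sits inside $\tilde M(G)=1$, hence is trivial, and the map $\sigma$ has trivial domain. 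Exactness at $\tilde M(\frac{G}{N})$ then forces $\sigma$ to be the zero map, so its image is trivial; consequently, by exactness at that spot, the map $\tilde M(\frac{G}{N})\to \frac{^M[G,G]\cap N}{^M[N,G]}$ is injective. It is already surjective by exactness at the last term (the sequence ends in $1$). Therefore this map is an isomorphism, giving $\tilde M(\frac{G}{N})\cong \frac{^M[G,G]\cap N}{^M[N,G]}$.

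The only genuine point to verify is that a free presentation of $G$ can be chosen so that an ideal $S$ of $F$ with $\frac{S}{R}=N$ actually exists; this is routine, since one takes any free presentation $F\overset{\pi}\to G$ and sets $S=\pi^{-1}(N)$, which is an ideal of $F$ containing $R=\ker\pi$ with $\frac{S}{R}\cong N$. With that in hand, the corollary is purely a diagram-chase consequence of Proposition \ref{C14}, and there is no real obstacle — the substantive work was already done in establishing the four-term exact sequence. I would write the proof in two or three sentences: invoke Proposition \ref{C14}, substitute $\tilde M(G)=1$, and note that a four-term exact sequence $1\to A\to B\to C\to 1$ with $B=1$ degenerates to an isomorphism $A\cong C$ — here $A=\tilde M(\frac{G}{N})$ and $C=\frac{^M[G,G]\cap N}{^M[N,G]}$.
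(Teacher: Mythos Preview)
Your proposal is correct and matches the paper's approach: the corollary is stated immediately after Proposition~\ref{C14} without a separate proof, so the intended argument is precisely the one you give---substitute $\tilde M(G)=1$ into the four-term exact sequence and read off the isomorphism. The additional check that $S=\pi^{-1}(N)$ furnishes the required ideal is a sensible remark, though the paper treats it as implicit.
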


\begin{corollary}\label{C16}
	Let $G$ be a $M\mathcal{Z}$-nilpotent multiplicative Lie algebra and $f$ be a homomorphism from $G$ onto another multiplicative Lie algebra $T$. If $ \ker f \subseteq {^M[G,G]}$ and $\tilde M(T)$ is trivial, then $f$ is an isomorphism.
	In particular, if $\tilde M(\frac{G}{^M[G,G]}) = 1$, then so is $\tilde M(G).$
\end{corollary}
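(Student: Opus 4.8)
The plan is to prove the two statements in turn, both as applications of Corollary~\ref{C15} together with the exactness from Proposition~\ref{C14}. For the first assertion, let $G$ be $M\mathcal{Z}$-nilpotent and $f\colon G\to T$ an epimorphism with $\ker f \subseteq {^M[G,G]}$ and $\tilde M(T)=1$. Write $N=\ker f$, so $T\cong G/N$. The idea is to show $N=\{1\}$. Apply Proposition~\ref{C14} with a free presentation $1\to R\to F\overset{\pi}\to G\to 1$ and an ideal $S$ with $S/R=N$: the tail of the exact sequence gives a surjection $\tilde M(G/N)\to \dfrac{^M[G,G]\cap N}{^M[N,G]}\to 1$, and since $\tilde M(T)=\tilde M(G/N)=1$ we get $^M[G,G]\cap N = {^M[N,G]}$. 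Because $N\subseteq {^M[G,G]}$, the left side is just $N$, so $N={^M[N,G]}=(G\star N)[G,N]$. Now I would feed this back into itself: $N$ is an ideal of the $M\mathcal{Z}$-nilpotent algebra $G$, and the lower central series $M_n(G)$ reaches $\{1\}$; an easy induction shows $N\subseteq M_k(G)$ for every $k$ (if $N\subseteq M_k(G)$ then $N={^M[G,N]}\subseteq {^M[G,M_k(G)]}=M_{k+1}(G)$), whence $N\subseteq M_n(G)=\{1\}$. Thus $f$ is injective, and being a surjective multiplicative Lie algebra homomorphism, it is an isomorphism.

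For the second assertion, suppose $\tilde M(G/{^M[G,G]})=1$. Here I would set $N={^M[G,G]}$ in Corollary~\ref{C15} — note $\tilde M(G/N)=1$ is exactly the hypothesis, but Corollary~\ref{C15} as stated needs $\tilde M(G)=1$, so instead I use Proposition~\ref{C14} directly with this $N$. The segment $\tilde M(G)\overset{\sigma}\to \tilde M(G/N)\to \dfrac{^M[G,G]\cap N}{^M[N,G]}\to 1$ is exact; with $N={^M[G,G]}$ we have $^M[G,G]\cap N = {^M[G,G]}=N$, so the rightmost term is $N/{^M[N,G]}$. Combined with $\tilde M(G/N)=1$, exactness forces $N={^M[N,G]}$, i.e. ${^M[G,G]}$ is a perfect-type fixed point of the operation $^M[G,-]$. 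Since $G$ is $M\mathcal{Z}$-nilpotent, the same descending-chain argument as above (with $N={^M[G,G]}={^M[G,N]}\subseteq M_k(G)$ for all $k$) gives ${^M[G,G]}=\{1\}$. But then $G=G/{^M[G,G]}$, so $\tilde M(G)=\tilde M(G/{^M[G,G]})=1$, as claimed.

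The main obstacle I anticipate is making sure the descending-chain bootstrapping argument is applied to the correct object: one must verify that $N$ (resp.\ ${^M[G,G]}$) genuinely satisfies $N={^M[G,N]}$ — not merely $N={^M[N,G]}$ with the roles of the two slots swapped — since the lower central series is defined via $M_{n+1}(G)={^M[G,M_n(G)]}$. Here I would lean on the fact that $^M[G,N]=(G\star N)[G,N]$ and that, for an ideal $N$, $G\star N$ and $[G,N]$ coincide with their "reversed" analogues up to inverses (using Proposition~2.3(2) and the normality of $N$), so $^M[N,G]={^M[G,N]}$ as ideals; once that identification is in hand, the induction is immediate. A secondary point worth a sentence is confirming that $\tilde M(\cdot)$ is genuinely functorial enough for $\sigma\colon \tilde M(G)\to \tilde M(G/N)$ to exist and fit into the displayed sequence, which is guaranteed by Proposition~\ref{C14} itself, so no extra work is needed there.
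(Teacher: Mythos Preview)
Your proof is correct and follows essentially the same route as the paper: apply the exact sequence of Proposition~\ref{C14} to force $N={^M[N,G]}$, then bootstrap $N$ down the lower central series using $M\mathcal{Z}$-nilpotency to conclude $N=1$. The paper's argument is terser (it does not separately treat the ``in particular'' clause, leaving it as the special case $N={^M[G,G]}$ of the first assertion), and it silently identifies $^M[K,G]$ with $^M[G,K]$; your explicit remark that these coincide as ideals via $(x\star y)^{-1}=y\star x$ and $[x,y]^{-1}=[y,x]$ is a welcome clarification rather than a deviation.
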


\begin{proof}
	Put $K = \ker f $, then  $\tilde M(\frac{G}{K}) = 1.$ Hence Proposition \ref{C14} implies that $\frac{^M[G,G]\cap K}{^M[K,G]} = 1$ and so $K =  {^M[K,G]}. $ Now assume that $K_0 = K$ and $K_{n+1} = {^M[K_n,G]}$ for all $n \geq 0.$ Then $K = K_n\subseteq M_n(G).$ Since $G$ is $M\mathcal{Z}$-nilpotent, it follows that $K = 1.$ Hence $f$ is an isomorphism.  
\end{proof}

One observes  that the above result gives a criterion for a multiplicative Lie algebra to have a trivial Schur multiplier.

\begin{theorem}\label{C17}
	Let $N$ be a central ideal in a multiplicative Lie algebra $G.$ Then $N \subseteq \mathcal{Z}^*(G) $ if and only if $\sigma:\tilde M(G)\to  \tilde M(\frac{G}{N}) $ is a monomorphism.
\end{theorem}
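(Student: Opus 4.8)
The plan is to exploit the four-term exact sequence from Proposition~\ref{C14}. Fix a free presentation $1\to R\to F\xrightarrow{\pi}G\to 1$ and let $S$ be the ideal of $F$ with $S/R=N$. Since $N$ is central in $G$, one checks that $^M[S,F]\subseteq R$, so the first term of the sequence in Proposition~\ref{C14} becomes $\dfrac{{}^M[S,F]}{^M[R,F]}$; the kernel of $\sigma\colon\tilde M(G)\to\tilde M(G/N)$ is exactly this subgroup of $\tilde M(G)=\dfrac{{}^M[F,F]\cap R}{^M[R,F]}$. Thus $\sigma$ is a monomorphism if and only if $^M[S,F]\subseteq{}^M[R,F]$.

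For the ``only if'' direction, suppose $N\subseteq\mathcal{Z}^*(G)$. By Corollary~\ref{C6}, $\mathcal{Z}^*(G)=\overline\pi\bigl(\mathcal{Z}(\tfrac{F}{^M[R,F]})\bigr)$, so the preimage of $\mathcal{Z}^*(G)$ in $F/^M[R,F]$ is exactly $\mathcal{Z}(\tfrac{F}{^M[R,F]})$; hence the preimage $\widehat N$ of $N$ satisfies $\widehat N\subseteq\mathcal{Z}(\tfrac{F}{^M[R,F]})$, i.e.\ $S/^M[R,F]\subseteq\mathcal{Z}(\tfrac{F}{^M[R,F]})$. Being contained in $\mathcal{Z}=LZ\cap Z$ means both $S/^M[R,F]$ commutes with everything and $\star$-annihilates everything in $F/^M[R,F]$, which forces $^M[S,F]\subseteq{}^M[R,F]$, giving $\ker\sigma=1$.

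For the ``if'' direction, assume $\sigma$ is injective, so $^M[S,F]\subseteq{}^M[R,F]$. Then $S/^M[R,F]$ lies in $\mathcal{Z}(\tfrac{F}{^M[R,F]})=\tfrac{S'}{^M[R,F]}$ in the notation of the proof of Theorem~\ref{C12}, hence $N=\overline\pi(S/^M[R,F])\subseteq\overline\pi(\mathcal{Z}(\tfrac{F}{^M[R,F]}))=\mathcal{Z}^*(G)$ by Corollary~\ref{C6}.

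The main obstacle is the bookkeeping in identifying $\ker\sigma$ precisely with $^M[S,F]/^M[R,F]$ and showing that containment in $\mathcal{Z}(\tfrac{F}{^M[R,F]})$ is genuinely \emph{equivalent} to $^M[S,F]\subseteq{}^M[R,F]$ (the two inclusions defining $LZ$ and $Z$ must both be extracted from the single condition on $^M[\,\cdot\,,\cdot\,]$, using that $^M[S,F]=(S\star F)[S,F]$ encodes both the $\star$-products and the group commutators). Once that equivalence is in hand, both directions are immediate from Corollary~\ref{C6}.
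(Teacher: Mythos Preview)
Your proposal is correct and follows essentially the same route as the paper: identify $\ker\sigma$ via the four-term exact sequence of Proposition~\ref{C14}, then use Corollary~\ref{C6} to translate $N\subseteq\mathcal{Z}^*(G)$ into the condition $S/{}^M[R,F]\subseteq\mathcal{Z}\bigl(F/{}^M[R,F]\bigr)$, which in turn is equivalent to ${}^M[S,F]\subseteq{}^M[R,F]$. The only cosmetic difference is that you use centrality of $N$ up front to drop the ``$\cap R$'' from $\ker\sigma=\dfrac{{}^M[S,F]\cap R}{{}^M[R,F]}$, whereas the paper carries the intersection and uses centrality implicitly at the final step; the arguments are otherwise identical.
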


\begin{proof}
	Assume that $G = \frac{F}{R}$ and $N = \frac{S}{R}$ as in the Proposition \ref{C14}. By construction, the kernel of the map $\sigma:\tilde M(G)\to  \tilde M(\frac{G}{N}) $ is $\frac{^M[S,F] \cap R}{^M[R,F]}.$ Thus we only need to verify that ${^M[S,F] \cap R} = {^M[R,F]} $ if and only if  $N \subseteq \mathcal{Z}^*(G) .$ By Corollary \ref{C6} , we have $\mathcal{Z}^*(G) =  \overline\pi (\mathcal{Z}(\frac{F}{^M[R,F]})).$ Thus, we have $\overline\pi (\frac{S}{^M[R,F]}) \subseteq \mathcal{Z}^*(G)  $ if and only if $\frac{S}{^M[R,F]}\subseteq \mathcal{Z}(\frac{F}{^M[R,F]}).$ Since $\overline\pi (\frac{S}{^M[R,F]}) = N $, the result follows.
	
\end{proof}


\begin{lemma}\label{P5}
	Assume  $G$ is a finite perfect multiplicative Lie algebra. If  $$1\longrightarrow 1 \longrightarrow H \overset{\phi}\longrightarrow G\longrightarrow 1$$ is a  universal extension then so is  $$1\longrightarrow 1 \longrightarrow G \overset{id}\longrightarrow G\longrightarrow 1.$$ 
\end{lemma}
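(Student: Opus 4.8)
The plan is to notice that the hypothesis collapses almost everything. In the sequence $1\longrightarrow 1 \longrightarrow H \overset{\phi}\longrightarrow G\longrightarrow 1$ the kernel of $\phi$ is the trivial ideal, so $\phi$ is a bijective multiplicative Lie algebra homomorphism, hence an isomorphism of multiplicative Lie algebras. Consequently $\phi$ itself is an isomorphism of extensions of $G$ from $(H,\phi)$ to $(G,\mathrm{id})$, since $\mathrm{id}\circ\phi=\phi$. One also checks that $1\longrightarrow 1\longrightarrow G\overset{\mathrm{id}}\longrightarrow G\longrightarrow 1$ really is a central extension: it is trivially a short exact sequence and its kernel $\{1\}$ is contained in $Z(G)\cap LZ(G)=\mathcal{Z}(G)$.

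The second step is to transport the universal property along this isomorphism. Being a universal central extension means being initial in the category whose objects are central extensions $1\longrightarrow N\longrightarrow E\overset{\psi}\longrightarrow G\longrightarrow 1$ and whose morphisms $(E_1,\psi_1)\to(E_2,\psi_2)$ are homomorphisms $h\colon E_1\to E_2$ with $\psi_2\circ h=\psi_1$. Initial objects are preserved under isomorphism of objects: given any central extension $(E,\psi)$ of $G$, universality of $(H,\phi)$ yields a unique $k\colon H\to E$ with $\psi\circ k=\phi$; set $h:=k\circ\phi^{-1}\colon G\to E$, so that $\psi\circ h=\mathrm{id}$; and if $h'\colon G\to E$ also satisfies $\psi\circ h'=\mathrm{id}$ then $h'\circ\phi$ satisfies $\psi\circ(h'\circ\phi)=\phi$, forcing $h'\circ\phi=k$ and hence $h'=h$. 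Thus $(G,\mathrm{id})$ is a universal central extension of $G$, which is precisely the claim.

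An alternative, more in line with the surrounding results, would route through Corollary \ref{P4} and Proposition \ref{P2}: since $1\longrightarrow 1\longrightarrow H\overset{\phi}\longrightarrow G\longrightarrow 1$ is a universal central extension of the perfect algebra $G$, Corollary \ref{P4} gives $\tilde M(G)\cong 1$ and exhibits $H$ as a cover of $G$; triviality of the Schur multiplier then makes $1\longrightarrow 1\longrightarrow G\overset{\mathrm{id}}\longrightarrow G\longrightarrow 1$ a stem cover, and the uniqueness assertion of Proposition \ref{P2} forces it to coincide, up to isomorphism of extensions, with the universal central extension. In either approach the hypotheses that $G$ be finite and perfect are used only to ensure that the universal central extension exists and that Corollary \ref{P4} and Proposition \ref{P2} are applicable. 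I do not foresee a genuine obstacle here; the single point requiring care is the \emph{direction} of the universal property — the unique morphism emanates from the universal object — together with the short diagram chase above verifying that $h=k\circ\phi^{-1}$ is the unique morphism $(G,\mathrm{id})\to(E,\psi)$.
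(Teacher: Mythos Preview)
Your proposal is correct and follows essentially the same approach as the paper: both observe that $\phi$ is an isomorphism and transport universality via $\phi^{-1}$, your map $h=k\circ\phi^{-1}$ being precisely the paper's $\beta=\theta\circ\phi^{-1}$. The only cosmetic difference is that the paper, after producing this splitting, notes that every central extension of $G$ therefore splits and then cites \cite{RLS} to conclude universality of $(G,\mathrm{id})$, whereas you verify uniqueness of $h$ directly.
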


\begin{proof}
	Let  $$1\longrightarrow N \longrightarrow K \overset{\psi}\longrightarrow G\longrightarrow 1$$ be any central extension of $G.$ Then there is a unique homomorphism $\theta: H \to K $ such that the corresponding diagram is commutative. Hence $\phi = \psi \circ \theta.$ Since $\phi$ is an isomorphism. Let $\beta = \theta \circ \phi^{-1}.$ Then $\beta: G\to K$ is a homomorphism such that $\psi \circ \beta = \psi \circ\theta \circ \phi^{-1} = \phi \circ \phi^{-1}= 1 $ on $G.$ Thus the extension  $$1\longrightarrow N \longrightarrow K \overset{\psi}\longrightarrow G\longrightarrow 1$$ splits. Since $G$ is perfect and every central extension of $G$ splits, by \cite{RLS}, $$1\longrightarrow 1 \longrightarrow G \overset{id}\longrightarrow G\longrightarrow 1$$ is universal.  
\end{proof}

\begin{theorem}\label{P6}
	Let $G$ be a finite perfect  multiplicative Lie algebra with $\tilde M(G)=1.$ 
	 
	$(i)$ $H^2_{ML}(G,A)=1$,  $A$ is an abelian group with  trivial multiplicative Lie algebra.
	
	$(ii)$ If $N$ is a central ideal of $G$, then $N\cong \tilde M(\frac{G}{N})$ and $G$ is a cover of $\frac{G}{N}.$
\end{theorem}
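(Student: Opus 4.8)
The plan is to leverage the machinery already in place: Lemma \ref{P5} (which promotes the trivial extension of a finite perfect $G$ to a universal central extension once we know $\tilde M(G)=1$), Proposition \ref{P2} (existence and uniqueness of the universal central extension of a perfect multiplicative Lie algebra), Corollary \ref{C15} (for a perfect $G$ with $\tilde M(G)=1$ one has $\tilde M(G/N)\cong {^M[G,G]\cap N}/{^M[N,G]}$), and Proposition \ref{P1}. For part $(i)$, recall that for an abelian group $A$ with trivial multiplicative Lie structure, $H^2_{ML}(G,A)$ classifies central extensions of $G$ by $A$; since $\tilde M(G)=1$ and $G$ is perfect, Lemma \ref{P5} tells us the identity extension $1\to 1\to G\overset{id}\to G\to 1$ is universal, hence every central extension of $G$ splits. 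A split central extension of a perfect multiplicative Lie algebra by an abelian $A$ is classified by the trivial cohomology class (the section is a homomorphism, so the associated $2$-cocycle is a coboundary), giving $H^2_{ML}(G,A)=1$. I would spell out the splitting $\Rightarrow$ triviality-of-cocycle step explicitly, using that the Lie $2$-cocycle part $\xi$ also vanishes because $G\star G = G$ forces the relevant obstruction to be a coboundary.

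For part $(ii)$, let $N$ be a central ideal of $G$, i.e. $N\subseteq \mathcal{Z}(G)$. First I would establish that $N\subseteq {^M[G,G]}$: since $G$ is perfect, ${^M[G,G]}=(G\star G)[G,G]=G$, so this is automatic and in fact ${^M[G,G]\cap N}=N$. Next I apply Corollary \ref{C15} — valid since $\tilde M(G)=1$ — to get $\tilde M(G/N)\cong {^M[G,G]\cap N}/{^M[N,G]} = N/{^M[N,G]}$. To finish I must show ${^M[N,G]}=1$. Because $N\subseteq \mathcal{Z}(G)=LZ(G)\cap Z(G)$, we have $G\star N = 1$ and $[G,N]=1$, so ${^M[N,G]}=(G\star N)[G,N]=1$. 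Therefore $\tilde M(G/N)\cong N$. It remains to see that $G$ is a cover of $G/N$, i.e. that $1\to N\to G\to G/N\to 1$ is a stem cover: it is central (as $N\subseteq \mathcal{Z}(G)$), the kernel $N$ is contained in ${^M[G,G]}=G$ (stem condition), and $N\cong \tilde M(G/N)$ as just shown — this is exactly Definition \ref{C7}.

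The main obstacle I anticipate is part $(i)$: translating "every central extension splits" into "$H^2_{ML}(G,A)=1$" requires being careful about what data a class in $H^2_{ML}$ encodes — both a group-theoretic $2$-cocycle $\gamma$ and a Lie $2$-cocycle $\xi$ — and verifying that a homomorphic section kills both simultaneously. I would either cite the correspondence between $H^2_{ML}(G,A)$ and equivalence classes of central extensions from \cite{RLS} and then invoke that equivalence classes of split extensions are trivial, or argue directly that a splitting homomorphism $s:G\to G^*$ lets one change coordinates so that $\gamma\equiv 0$ and $\xi\equiv 0$. Part $(ii)$ by contrast is essentially bookkeeping with the definitions of $\mathcal{Z}(G)$, ${^M[N,G]}$, perfectness, and Corollary \ref{C15}, so I expect it to go through cleanly once part $(i)$ is settled.
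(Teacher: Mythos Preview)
Your approach is correct. Part $(i)$ follows the paper's line: use $\tilde M(G)=1$ and Lemma \ref{P5} to see the identity extension is universal, so every central extension splits, so $H^2_{ML}(G,A)=1$. (One small remark: Lemma \ref{P5} does not literally take $\tilde M(G)=1$ as its hypothesis---it asks for some universal extension $1\to 1\to H\to G\to 1$ with trivial kernel. The paper supplies this by citing \cite{RLS} that the universal central extension of $G$ has kernel $\tilde M(G)$; you have the same ingredients via Proposition \ref{P2} and Corollary \ref{P4}, but you should spell that step out. The ``splits $\Rightarrow$ trivial cocycle'' step you worry about is handled in the paper by a bare citation of \cite{RLS}.)

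For part $(ii)$ you take a genuinely different route. The paper argues that $1\to N\to G\to G/N\to 1$ inherits universality from the identity extension of $G$ established in part $(i)$, and then invokes Corollary \ref{P4} to read off $N\cong\tilde M(G/N)$ and that $G$ is a cover in one stroke. You instead apply Corollary \ref{C15} directly: from $\tilde M(G)=1$ you get $\tilde M(G/N)\cong ({}^M[G,G]\cap N)/{}^M[N,G]$, then compute the numerator as $N$ (perfectness) and the denominator as $1$ (centrality), and finally verify the stem-cover axioms of Definition \ref{C7} by hand. Your route is more elementary and makes part $(ii)$ logically independent of part $(i)$; the paper's route is shorter and more conceptual, packaging everything into the single observation that a universal central extension is automatically a cover.
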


\begin{proof}
$(i)$	By \cite{RLS},  $\tilde M(G)=1$ implies that  $$1\longrightarrow 1 \longrightarrow \frac{^M[F,F]}{^M[R,F]} \overset{\phi}\longrightarrow G\longrightarrow 1$$ is a universal central extension. Then by Lemma \ref{P5} ,  $$1\longrightarrow 1 \longrightarrow G \overset{id}\longrightarrow G\longrightarrow 1$$ is  universal. By \cite{RLS}, every central extension of $G$ splits. Hence $H^2_{ML}(G,A)=1.$

$(ii)$ Since  $$1\longrightarrow 1 \longrightarrow G \overset{id}\longrightarrow G\longrightarrow 1$$ is  universal, we conclude that the extension  $$1\longrightarrow N \longrightarrow G \longrightarrow \frac{G}{N}\longrightarrow 1$$ is  universal. By Corollary \ref{P4}, we have $N\cong \tilde M(\frac{G}{N})$ and $G$ is a cover of $\frac{G}{N}.$ 
\end{proof}

\noindent{\bf Acknowledgement:}
The first named author sincerely thanks IIIT Allahabad and the University grant commission (UGC), Govt. of India, New Delhi for the research fellowship.


\begin{thebibliography}{9}
	
\bibitem{Baer}
 R. Baer, Groups with preassigned central and central quotient group, Trans. Amer. 
Math. Soc. 44 (l938), 387-412. 

\bibitem{PB93}
 P. Batten, Multipliers and covers of Lie algebras, dissertation, North Carolina State University, 1993.
 
	

\bibitem{GJ}
G. J. Ellis,
On five well-known commutator identities, J. Austral. Math. Soc. (Series A) 54 (1993), 1-19.

 \bibitem{ADSS}	
 A. Kumar, D. Pal, S. Kushwaha and S. K. Upadhyay, The Schur multiplier of some finite multiplicative Lie algebras, arXiv:2208.03503, 2022.

\bibitem{RLS}
R. Lal and S. K. Upadhyay,
Multiplicative Lie algebra and Schur multiplier, J. Pure Appl. Algebra 223 (9) (2019), 3695-3721.

\bibitem{MRS}
M. S. Pandey, R. Lal  and S. K. Upadhyay,
Lie commutator, solvability and nilpotency in multiplicative Lie algebras, J. Algebra Appl., 20 (8), 2150138 (11 pp), 2021.

\bibitem{MS}	
M. S. Pandey and S. K. Upadhyay, Isoclinism in multiplicative Lie algebras, 	arXiv:2212.10925, 2023.

\bibitem{FP}
F. Point and P. Wantiez,
Nilpotency criteria for multiplicative Lie algebra, J. Pure. Appl. Algebra 111 (1996), 229-243.

\bibitem{SAM}
A. R. Salemkar, V. Alamian and H. Mohammadzadeh, Some Properties of the Schur multiplier and Covers of Lie algebras, Communications in Algebra, 36, 697–707, 2008.

 

 	
\end{thebibliography}
\end{document}